\documentclass{article}[12pt]

\usepackage{fullpage}
\usepackage{amssymb}
\usepackage{amsmath}
\usepackage{amsthm}
\usepackage{url}
\usepackage{hyperref}
\usepackage{mathabx,rotating}
\usepackage{multirow}
\usepackage{mathabx}
\usepackage{tablefootnote}
\usepackage{pdflscape}

\newtheorem{theorem}{Theorem}
\newtheorem{corollary}{Corollary}

\newtheorem{lemma}{Lemma}

\newtheorem{definition}{Definition}

\begin{document}

\title{On rearrangement inequalities for T-norm logics}
\date{April 12, 2022\\Latest update: July 25, 2023}
\author{Chai Wah Wu\\MIT-IBM Watson AI Lab\\IBM T. J. Watson Research Center\\P. O. Box 218, Yorktown Heights, New York 10598, USA\\e-mail: chaiwahwu@ieee.org}

\maketitle

\begin{abstract}
The rearrangement inequality states that the sum of products of permutations of 2 sequences of real numbers are maximized when the terms are similarly ordered and minimized when the terms are ordered in opposite order. We show that similar inequalities exist for multi-valued logic with the multiplication and addition operation replaced with various  $T$-norms and $T$-conorms respectively. For instance, we show that the rearrangement inequality holds when the $T$-norms and $T$-conorms are derived from Archimedean copulas.
\end{abstract}

\section{Introduction}
The well-known rearrangement inequality \cite{Hardy1952} plays a key role in the derivation of many mathematical inequalities. It states that for two finite sequences of $n$ real numbers, the sum of the product of pairs of terms is maximal when the sequences are similarly ordered and minimal when oppositely ordered. In other words, suppose $x_1 \leq x_2 \leq \cdots \leq x_n$  
and $y_1 \leq y_2 \leq \cdots \leq y_n$ are real numbers, then for any permutation $\sigma$ in the symmetric group $S_n$ of permutations on $\{1,\cdots , n\}$,
\begin{equation}x_ny_1+\cdots +x_1y_n \leq x_{\sigma(1)}y_1+\cdots + x_{\sigma(n)}y_n \leq x_1y_1+\cdots x_ny_n
\label{eqn:rearrange1}
\end{equation}

The dual inequality, obtained by swapping addition with multiplication and the direction of the inequality, is also true \cite{oppenheim:rearrangement:1954}, but only for nonnegative numbers in general ($x_i\geq 0$, $y_i\geq 0$):

\begin{equation}
(x_1+y_1)\times \cdots \times (x_n+y_n) \leq (x_{\sigma(1)}+y_1)\times\cdots \times (x_{\sigma(n)}+y_n ) \leq (x_n+y_1)\times\cdots \times (x_1+y_n)
\label{eqn:rearrange2}
\end{equation}

These two inequalities say that similarly ordered terms minimize the product of sums of pairs, while opposite ordered terms maximize the product of sums.
In Ref. \cite{Minc1971} it was shown that Eq. (\ref{eqn:rearrange1}) and Eq. (\ref{eqn:rearrange2}) are equivalent for positive numbers.

In Eqns. (\ref{eqn:rearrange1}-\ref{eqn:rearrange2}) there are two binary operations $+$ and $\times$ and an order $\leq$ and in Ref. \cite{wu:rearrangement:2022} this has been extended to partially ordered rings (po-rings) such as Hermitian matrices.
The goal of this paper is to extend these inequalities to algebraic structures governing multi-valued logic. The motivation here is that there are multi-valued logics where the conjunction is defined as multiplication and the disjunction is defined as addition clipped at $1$. It is therefore of interest to study whether consequences of arithmetical operations, such as the rearrangement inequality, extend to more general definitions of conjunction and disjunction, and what the requirements are that allow for such extensions.

\section{Multi-valued logic via $T$-norms}

Note that if $x_i$, $y_i$ are variables in Boolean algebra, with $+$ and $\times$ representing the Boolean logical disjunction $\vee$ and conjunction $\wedge$ respectively, then Eqns. (\ref{eqn:rearrange1}-\ref{eqn:rearrange2}) are trivially true. Classical Boolean logic have been extended to logic systems that can take more than 2 values in many different ways \cite{sep-logic-manyvalued}. We consider such algebras of logic and replace $+$ and $\times$ in the rearrangement inequality with disjunction and conjunction respectively. 
One approach to multi-valued logic is by using $T$-norms (or triangular norms) and $T$-conorms to denote conjunction and disjunction respectively. Such norms have been studied intensely in the past and generalize well-known multi-valued logic such as \L{}ukasiewicz logic. We summarize in this section the standard definitions and results of $T$-norms and $T$-conorms \cite{fodor:t-norm:2004,KLEMENT:positonpaperI:2004,JENEI200427}.

\begin{definition} \label{def:t-norm}
A function $\otimes:[0,1]^2\rightarrow [0,1]$ is called a $T$-norm if
\begin{itemize}
\item $\otimes$ is commutative
\item $\otimes$ is associative
\item $\otimes$ is monotonic, i.e. for all $x,y,z\in [0,1]$, $x\leq y$ implies that
$x\otimes z \leq y\otimes z$.
\item Neutrality of $1$, i.e. $x\otimes 1 = x$ for all $x\in [0,1]$.
\end{itemize}
\end{definition}

\begin{definition} \label{def:t-conorm}
A function $\oplus:[0,1]^2\rightarrow [0,1]$ is called a $T$-conorm (or an $S$-norm) if
\begin{itemize}
\item $\oplus$ is commutative
\item $\oplus$ is associative
\item $\oplus$ is monotonic
\item Neutrality of $0$, i.e. $x\oplus 0 = x$ for all $x\in [0,1]$.
\end{itemize}
\end{definition}

\begin{definition}
For a $T$-norm $f$, a number $0<x<1$ is called a zero divisor of $f$ if there exists a number $0<y<1$ such that $f(x,y) = 0$.
\end{definition}

\begin{definition}
A monotonically nonincreasing function $\neg: [0,1]\rightarrow [0,1]$ such that $\neg (0) = 1$ and $\neg (1) = 0$ is called a {\em negation}. It is called {\em strict} if it is strictly monotone and it is called {\em strong} if it is strict and involutive, i.e.
$\neg \neg x = x$ for all $x \in [0,1]$.
\end{definition}

A commonly used (strong) negation function is $x \rightarrow 1-x$. All strong negation functions can be written as $\phi^{-1}(1-\phi(x))$ for some continuous strictly increasing bijection $\phi$ on $[0,1]$ \cite{trillas:negation:1979}.

\begin{definition}
For a function $f:[0,1]^2\rightarrow [0,1]$ and a negation function $\neg$, its {\em dual} $g = \Phi^{\neg} (f)$ is defined as
$g(x,y) = \neg(f(\neg(x),\neg(y)))$.
\end{definition}

It can be shown that for a strong negation functions $\neg$, 
$\Phi^{\neg} (\Phi^{\neg} (f)) = f$ and
$f$ is a $T$-norm if and only if its dual $\Phi^{\neg}(f)$ is a $T$-conorm. 
If $\neg$ is the standard negation function $x \rightarrow 1-x$ we will simply use $\Phi$ to denote $\Phi^{\neg}$, i.e. $\Phi(f)(x,y) = 1-f(1-x,1-y)$.

Examples of well-known $T$-norms and $T$-conorms are listed in Tables \ref{tbl:t_norm}-\ref{tbl:t_conorm} where each $T$-norm in Table \ref{tbl:t_norm} is the dual (via the standard negation) of the corresponding $T$-conorm in Table \ref{tbl:t_conorm} \cite{dubois1980fuzzy,Mayor1991,Klement:positionII:2004,nelsen:copulas:2006}. 

\begin{table}[htbp]
\centering
\begin{tabular}{|c||c|c|c|}
\hline
& $T$-norm & \shortstack{property\\$A$} &\shortstack{property\\$B$}\\
\hline\hline
minimum ({G\"odel}) $T_m$ & $\min (x,y)$ &\checkmark & \checkmark \\ \hline
product $T_p$ & $xy$ &\checkmark & \checkmark  \\  \hline
\L{}ukasiewicz $T_L$ & $\max(x+y-1,0)$ &\checkmark & \checkmark \\  \hline
drastic $T_d$  & $\min (x,y) $ if $\max(x,y) = 1$, otherwise $0$&&  \\  \hline
nilpotent minimum $T_n$ & $\min(x,y)$ if $x+y > 1$, otherwise $0$&\checkmark&  \\  \hline
Dubois-Prade $T_{DP}^{\alpha}$& $\frac{xy}{\max(x,y,\alpha)}$ for some $\alpha\in [0,1]$ &\checkmark&\checkmark\\\hline
Ali-Mikhail-Haq $T_{AMH}^\alpha$ & $\begin{array}{cl} \frac{xy}{\alpha+(1-\alpha)(x+y-xy)} & \mbox{if } \max(x,y) > 0 \\
0 & \mbox{otherwise}\end{array}$, $\alpha \in [0,2]$&\checkmark&\checkmark  \\  \hline
Clayton $T_C^\alpha$ & $\max(x^\alpha+y^\alpha-1,0)^{\frac{1}{\alpha}}$, $0 \neq \alpha \leq 1$&\checkmark&\checkmark  \\  \hline
Frank $T_F^\alpha$ & $\log_{\alpha}\left(1+\frac{(\alpha^x-1)(\alpha^y-1)}{\alpha-1}\right)$, $0 < \alpha \neq 1$&\checkmark&\checkmark  \\  \hline
Yager $T_Y^\alpha$ & $\max(1-((1-x)^\alpha + (1-y)^\alpha)^\frac{1}{\alpha},0)$, $\alpha \geq 1$&\checkmark&\checkmark  \\  \hline
Mayor-Torrens $T_{MT}^\alpha$ & $\begin{array}{cl}\max(x+y-\alpha,0) & \mbox{if }  0\leq x,y \leq \alpha\\
                                                                                    \min(x,y) & \mbox{otherwise} \end{array}$, $\alpha\in [0,1]$ &\checkmark&\checkmark  \\  \hline
Sugeno-Weber\tablefootnote{The Sugeno-Weber $T$-norms are typically defined for all $\alpha \geq -1$. We consider only the case $\alpha \geq 0$ as it allows us to prove properties $A$ and $B$.} $T_{SW}^\alpha$ & $\max(\frac{x+y-1+\alpha xy}{1+\alpha},0)$, $\alpha \geq 0$&\checkmark&\checkmark  \\  \hline
Gumbel $T_{G}^\alpha$ & $\begin{array}{cl} e^{-((-\ln(x))^\alpha+(-\ln(y))^\alpha)^\frac{1}{\alpha}} & \mbox{if } \min(x,y) > 0 \\ 0 & \mbox{otherwise}
\end{array}$, $\alpha \geq 1$&\checkmark&\checkmark  \\  \hline
Joe $T_{J}^\alpha$ & $1-\left((1-x)^\alpha+(1-y)^\alpha-(1-x)^\alpha (1-y)^\alpha\right)^{\frac{1}{\alpha}}$, $\alpha \geq 1$ &\checkmark&\checkmark  \\ \hline
\end{tabular}
\caption{Table of $T$-norms. Each $T$-norm is a dual via the standard negation to a corresponding $T$-conorm in Table \ref{tbl:t_conorm}.}
\label{tbl:t_norm}
\end{table}

\begin{table}[htbp]
\centering
\begin{tabular}{|c||c|c|c|}
\hline
& $T$-conorm & \shortstack{property\\$A'$} & \shortstack{property\\$B'$}\\
\hline\hline
maximum $T_m'$ & $\max (x,y)$ &\checkmark & \checkmark  \\ \hline
\shortstack{probabilistic\\sum $T_p'$}& $x+ y-xy$ &\checkmark & \checkmark \\  \hline
\shortstack{bounded\\sum $T_L'$} & $\min(x+y,1)$ &\checkmark & \checkmark\\  \hline
drastic $T_d'$ & $\max (x,y)$ if $\min(x,y) = 0$, otherwise $1$&&  \\  \hline
\shortstack{nilpotent\\maximum $T_n'$} & $\max(x,y)$ if $x+y < 1$, otherwise $1$ &\checkmark & \\  \hline
\shortstack{Dubois-Prade\\$T_{DP}'^{\alpha}$}& $1- \frac{(1-x)(1-y)}{1-\min(x,y,1-\alpha)}$ for some $\alpha\in [0,1]$ &\checkmark&\checkmark\\\hline
\shortstack{Ali-Mikhail-Haq\\ $T_{AMH}'^\alpha$} & $\begin{array}{cc} \frac{x+y+(\alpha -2)xy}{1+(\alpha -1)xy} & \mbox{if } \min(x,y) < 1 \\
1 & \mbox{otherwise}\end{array}$, $\alpha \in [0,2]$&\checkmark&\checkmark  \\  \hline
Clayton $T_C'^\alpha$ & $1-\max((1-x)^\alpha+(1-y)^\alpha-1,0)^{\frac{1}{\alpha}}$, $0\neq \alpha \leq 1$&\checkmark&\checkmark  \\  \hline
Frank $T_F'^\alpha$ & $1-\log_{\alpha}\left(1+\frac{(\alpha^{1-x}-1)(\alpha^{1-y}-1)}{\alpha-1}\right)$, $\alpha\neq 0, 1$&\checkmark&\checkmark  \\  \hline
Yager $T_Y'^\alpha$ & $\min((x^\alpha + y^\alpha)^\frac{1}{\alpha},1)$, $\alpha \geq 1$&\checkmark&\checkmark  \\  \hline
\shortstack{Mayor-Torrens\\$T_{MT}'^{\alpha}$} & $\begin{array}{cl}\min(x+y+\alpha-1,1) & \mbox{if }  1-\alpha \leq x,y \leq 1\\
                                                                                    \max(x,y) & \mbox{otherwise} \end{array}$, $\alpha\in [0,1]$ &\checkmark&\checkmark  \\  \hline
\shortstack{Sugeno-Weber\\$T_{SW}'^\alpha$} & $\min (x+y-\frac{\alpha}{1+\alpha} xy,1)$, $\alpha \geq 0$&\checkmark&\checkmark  \\  \hline
Gumbel $T_{G}'^\alpha$ & $\begin{array}{cl} 1-e^{-((-\ln(1-x))^\alpha+(-\ln(1-y))^\alpha)^\frac{1}{\alpha}} & \mbox{if } \max(x,y) < 1 \\ 1 & \mbox{otherwise}\end{array}$, $\alpha \geq 1$&\checkmark&\checkmark  \\  \hline
Joe $T_{J}'^\alpha$ & $\left(x^\alpha+y^\alpha-x^\alpha y^\alpha\right)^{\frac{1}{\alpha}}$, $\alpha \geq 1$ &\checkmark&\checkmark  \\ \hline
\end{tabular}
\caption{Table of $T$-conorms.  Each $T$-conorm is a dual via the standard negation to a corresponding $T$-norm in Table \ref{tbl:t_norm}.}
\label{tbl:t_conorm}
\end{table}

The procedure of ordinal sum produces another $T$-norm from a countable sequence of $T$-norms and is defined as \cite{clifford:semigroups:1954,JENEI2002199}:
\begin{definition}
Let $(a_i,b_i)\subseteq [0,1]$ be a family of pairwise disjoint and non-empty open intervals and $f_i$ be $T$-norms.
The ordinal sum $g = \Asterisk_{i} (f_i,a_i,b_i)$ is defined as:
$$g(x,y) = \left\{\begin{array}{ll} a_i + (b_i-a_i)f_i\left(\frac{x-a_i}{b_i-a_i},\frac{y-a_i}{b_i-a_i}\right) & \mbox{if } x,y\in [a_i,b_i]^2 \\ \min(x,y) & \mbox{otherwise}
\end{array}\right.
$$
\end{definition}

It has been shown that the ordinal sum of $T$-norms is a $T$-norm.
Another general method of constructing $T$-norms is via generators.

\begin{definition}[\cite{Klement:positionII:2004}] 
For a $T$-norm $f$, a function $\mu:[0,1]\rightarrow [0,\infty]$ is called an {\em additive generator} of $f$ if $\mu$ is strictly decreasing, right-continuous at $0$, $\mu(1) = 0$ such for all $0\leq x, y \leq 1$, $\mu(x)+\mu(y) \in \mbox{Range}(\mu) \cup [\mu(0),\infty]$ and 
$f(x,y) = \mu^{(-1)}(\mu(x)+\mu(y))$ where $\mu^{(-1)}$ is the pseudo-inverse of $\mu$.
\end{definition}

Not all $T$-norms have an additive generator. 

\begin{definition}
A $T$-norm $\otimes$ is called {\em Archimedean} if for all $0<x, y<1$, there exists $n$ such that $x^{\otimes n}$ (i.e. $x\otimes...\otimes x$ $n$ times) $\leq y$.
\end{definition} 

\begin{theorem}[\cite{Klement:positionII:2004}]
If a $T$-norm has an additive generator, then it is Archimedean.
\end{theorem}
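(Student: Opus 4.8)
The plan is to express the iterated $\otimes$-power $x^{\otimes n}$ explicitly through the additive generator $\mu$ of $\otimes$ and then reduce the claim to the ordinary Archimedean property of $\mathbb{R}$. I would first fix the standard pseudo-inverse $\mu^{(-1)}(u)=\sup\{t\in[0,1] : \mu(t)>u\}$ (with the convention $\sup\emptyset=0$) and record a few elementary consequences of $\mu$ being strictly decreasing with $\mu(1)=0$: $\mu^{(-1)}$ is non-increasing; $\mu^{(-1)}(\mu(t))=t$ for all $t\in[0,1]$ and $\mu(\mu^{(-1)}(u))=u$ for all $u\in\mathrm{Range}(\mu)$, while $\mu^{(-1)}(u)=0$ for all $u\geq\mu(0)$; and finally, if $u\geq\mu(t)$ then $\mu^{(-1)}(u)\leq t$, because any $s$ with $\mu(s)>u\geq\mu(t)$ satisfies $s<t$ by strict monotonicity.

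The heart of the argument is the identity $x^{\otimes n}=\mu^{(-1)}(n\mu(x))$, which I would prove by induction on $n$ together with the auxiliary statement $n\mu(x)\in\mathrm{Range}(\mu)\cup[\mu(0),\infty]$. The case $n=1$ is immediate. For the step, the definition of an additive generator gives $x^{\otimes(n+1)}=x^{\otimes n}\otimes x=\mu^{(-1)}\bigl(\mu(x^{\otimes n})+\mu(x)\bigr)$, into which one substitutes $x^{\otimes n}=\mu^{(-1)}(n\mu(x))$. If $n\mu(x)=\mu(z)$ lies in $\mathrm{Range}(\mu)$, then $\mu(x^{\otimes n})=n\mu(x)$, so $x^{\otimes(n+1)}=\mu^{(-1)}((n+1)\mu(x))$, and $(n+1)\mu(x)=\mu(z)+\mu(x)\in\mathrm{Range}(\mu)\cup[\mu(0),\infty]$ by the defining property of an additive generator. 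If instead $n\mu(x)\geq\mu(0)$, then $x^{\otimes n}=0$, hence $x^{\otimes(n+1)}=0\otimes x\leq 0\otimes 1=0$ by monotonicity and neutrality of $1$, while $(n+1)\mu(x)\geq n\mu(x)\geq\mu(0)$ forces $\mu^{(-1)}((n+1)\mu(x))=0$; in either case both statements persist.

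To conclude, fix $x,y\in(0,1)$. Strict monotonicity together with $\mu(1)=0$ gives $\mu(x)>0$, and $\mu(y)<\mu(0)$ forces $\mu(y)<\infty$. Hence there is an $n$ with $n\mu(x)\geq\mu(y)$, and then the identity above together with the last recorded property of $\mu^{(-1)}$ yields $x^{\otimes n}=\mu^{(-1)}(n\mu(x))\leq y$, which is exactly the condition that $\otimes$ be Archimedean.

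I expect the inductive identity $x^{\otimes n}=\mu^{(-1)}(n\mu(x))$ to be the only real obstacle: since $\mu^{(-1)}$ is merely a one-sided inverse, one must keep track of whether $n\mu(x)$ falls in $\mathrm{Range}(\mu)$ or in the tail $[\mu(0),\infty]$ --- the latter being the nilpotent situation in which the powers actually hit $0$ --- and it is precisely the technical requirement ``$\mu(x)+\mu(y)\in\mathrm{Range}(\mu)\cup[\mu(0),\infty]$'' in the definition of an additive generator that makes the induction close. Everything after that identity is routine.
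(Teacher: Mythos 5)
Your proof is correct. Note that the paper does not prove this statement at all---it is imported by citation from Klement--Mesiar--Pap---so there is no internal argument to compare against; your proof is the standard one, and it is sound: the inductive identity $x^{\otimes n}=\mu^{(-1)}(n\mu(x))$, carried along with the side condition $n\mu(x)\in\mathrm{Range}(\mu)\cup[\mu(0),\infty]$, is exactly the right bookkeeping for the pseudo-inverse, and the final step correctly reduces the Archimedean property of the $T$-norm (with $\mu(x)>0$ for $x<1$ and $\mu(y)<\infty$ for $y>0$) to the Archimedean property of the reals.
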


The definitions of $T$-norm and $T$-conorm are almost identical and differ only in the identity element. In \cite{YAGER1996111,fodor:uninorm:1997} these definitions are unified by allowing an arbitrary identity element.

\begin{definition}\label{def:uninorm}
A function $\otimes:[0,1]^2\rightarrow [0,1]$ is called a uninorm if
\begin{itemize}
\item $\otimes$ is commutative
\item $\otimes$ is associative
\item $\otimes$ is monotonic
\item Existence of identity element $e$, i.e. there exists $e\in [0,1]$ such that $x\otimes e = x$ for all $x\in [0,1]$.
\end{itemize}
\end{definition}
It is clear that a $T$-norm and a $T$-conorm are uninorms with identity element $1$ and $0$ respectively.

\begin{definition}
A uninorm $\otimes$ is {\em conjunctive} if $0\otimes1 = 0$ and is {\em disjunctive} if $0\otimes 1 = 1$.
\end{definition}
It was shown that a uninorm is either conjunctive or disjunctive \cite{fodor:uninorm:1997} and it is clear that a $T$-norm is a conjunctive uninorm and a $T$-conorm is a disjunctive uninorm.
Note that all the $T$-norms and $T$-conorms in Tables \ref{tbl:t_norm}-\ref{tbl:t_conorm} are conjunctive and disjunctive respectively.

\section{Rearrangement inequalities for $T$-norms}

\begin{definition}
Let $\otimes$ and $\oplus$ be uninorms. We say that $(\otimes, \oplus)$ satisfies the {\em rearrangement inequality} if for all $n$ and for all 
$0\leq x_1 \leq x_2 \cdots \leq x_n\leq 1$ 
and $0\leq y_1 \leq y_2 \cdots \leq y_n \leq 1$, and any permutation $\sigma\in S_n$,
\begin{equation}(x_n\otimes y_1)\oplus\cdots \oplus (x_1\otimes y_n) \leq (x_{\sigma(1)}\otimes y_1)\oplus \cdots \oplus  (x_{\sigma(n)}\otimes y_n) \leq (x_1\otimes y_1)\oplus \cdots \oplus (x_n\otimes y_n)
\label{eqn:rearrange1_t}
\end{equation}
We say that $(\otimes, \oplus)$ satisfies the {\em dual rearrangement inequality} if for all $n$ and for all 
$0\leq x_1 \leq x_2 \cdots \leq x_n\leq 1$ 
and $0\leq y_1 \leq y_2 \cdots \leq y_n \leq 1$, and any permutation $\sigma\in S_n$,
\begin{equation}(x_n\oplus y_1)\otimes\cdots \otimes (x_1\oplus y_n) \geq (x_{\sigma(1)}\oplus y_1)\otimes \cdots \otimes  (x_{\sigma(n)}\oplus y_n) \geq (x_1\oplus y_1)\otimes \cdots \otimes (x_n\oplus y_n)
\label{eqn:rearrange2_t}
\end{equation}
\end{definition}

The associativity and commutativity implies that $(\otimes, \otimes)$ trivially satisfies the rearrangement inequality and its dual.
Not all choices for $(\otimes, \oplus)$ will satisfy the rearrangement inequality or its dual. For instance, for $\otimes = T_n$ and $\oplus=T_L'$, the rearrangement inequality is not satisfied as evidenced by the two $2$-element sequences $x_1=0.2, x_2=0.7$ and $y_1 = 0.6, y_2=0.9$. Similarly, for $\otimes = T_d$ and $\oplus=T_L'$,
the dual rearrangement inequality is not satisfied as evidenced by the two $2$-element sequences $x_1=0.38, x_2=0.96$ and $y_1 = 0.005, y_2=0.05$. 

Our first result gives necessary and sufficient conditions for a pair $(\otimes, \oplus)$ to satisfy the rearrangement inequality and its dual.

\begin{theorem}
Let $\otimes$ and $\oplus$ be uninorms. $(\otimes, \oplus)$ satisfies the rearrangement inequality if and only if for all $0\leq x_1\leq x_2\leq 1$ and $0\leq y_1\leq y_2\leq 1 $,
\begin{equation} (x_1\otimes y_1)\oplus (x_2\otimes y_2) \geq (x_1\otimes y_2)\oplus (x_2\otimes y_1) \label{eqn:condition} \end{equation}
 $(\otimes, \oplus)$ satisfies the dual rearrangement inequality if and only if for all $0\leq x_1\leq x_2\leq 1$ and $0\leq y_1\leq y_2\leq 1$, 
\begin{equation} (x_1\oplus y_1)\otimes (x_2\oplus y_2) \leq (x_1\oplus y_2)\otimes (x_2\oplus y_1) \label{eqn:condition_dual} \end{equation}
\end{theorem}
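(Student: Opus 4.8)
The plan is to treat both equivalences with one template: necessity is the two-element instance of the definition, and sufficiency is an adjacent-transposition (``bubble sort'') argument that upgrades the pairwise inequality to the general one using only associativity, commutativity and monotonicity of the outer operation.

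\emph{Necessity.} Apply \eqref{eqn:rearrange1_t} with $n=2$, the sequences $x_1\le x_2$ and $y_1\le y_2$, and $\sigma$ the transposition $(1\,2)$. Its leftmost inequality then reads $(x_2\otimes y_1)\oplus(x_1\otimes y_2)\le(x_1\otimes y_1)\oplus(x_2\otimes y_2)$, which is \eqref{eqn:condition} after commuting the two summands on the left. In the same way the $n=2$ instance of \eqref{eqn:rearrange2_t} yields \eqref{eqn:condition_dual}.

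\emph{Sufficiency.} Fix sorted sequences $0\le x_1\le\cdots\le x_n\le 1$, $0\le y_1\le\cdots\le y_n\le 1$ and set $S_\sigma:=(x_{\sigma(1)}\otimes y_1)\oplus\cdots\oplus(x_{\sigma(n)}\otimes y_n)$. By associativity and commutativity of $\oplus$, $S_\sigma$ is independent of the order and grouping of its $n$ summands, and by the monotonicity axiom of Definition~\ref{def:uninorm} together with commutativity, $\oplus$ is monotone in each argument, so $u\le u'$ implies $R\oplus u\le R\oplus u'$ for every $R$. Rewriting \eqref{eqn:condition} via commutativity of $\oplus$ gives the pairwise fact
\begin{equation}(a\otimes c)\oplus(b\otimes d)\ \ge\ (a\otimes d)\oplus(b\otimes c)\qquad\text{for }a\le b,\ c\le d\text{ in }[0,1].\label{eqn:pw}\end{equation}
Given $\sigma\in S_n$ with $\sigma\ne\mathrm{id}$, pick a descent $p$ (so $\sigma(p)>\sigma(p+1)$) and let $\sigma'$ be $\sigma$ with the values at positions $p$ and $p+1$ interchanged. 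Grouping the $\oplus$-sum so that the summands at positions $p,p+1$ appear last, $S_\sigma=R\oplus[(x_{\sigma(p)}\otimes y_p)\oplus(x_{\sigma(p+1)}\otimes y_{p+1})]$ and $S_{\sigma'}=R\oplus[(x_{\sigma(p+1)}\otimes y_p)\oplus(x_{\sigma(p)}\otimes y_{p+1})]$ with the same remainder $R$. Since $y_p\le y_{p+1}$ and $x_{\sigma(p+1)}\le x_{\sigma(p)}$, \eqref{eqn:pw} (with $a=x_{\sigma(p+1)},\,b=x_{\sigma(p)},\,c=y_p,\,d=y_{p+1}$) shows the bracketed term for $\sigma'$ is at least that for $\sigma$, whence $S_{\sigma'}\ge S_\sigma$ by monotonicity of $\oplus$. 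Each such swap removes exactly one inversion of $\sigma$, so after finitely many swaps $\sigma$ becomes $\mathrm{id}$ and $S_\sigma\le S_{\mathrm{id}}=(x_1\otimes y_1)\oplus\cdots\oplus(x_n\otimes y_n)$, the right inequality of \eqref{eqn:rearrange1_t}. For the left inequality, run the same process toward the reversal permutation $\tau$, $\tau(k)=n+1-k$: if $\sigma\ne\tau$ it has an ascent $\sigma(p)<\sigma(p+1)$, and the same computation with \eqref{eqn:pw} now gives $S_{\sigma'}\le S_\sigma$, so iterating reaches $\tau$ and $S_\sigma\ge S_\tau=(x_n\otimes y_1)\oplus\cdots\oplus(x_1\otimes y_n)$. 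The dual rearrangement inequality follows by the identical argument with the roles of $\otimes$ and $\oplus$ swapped, all inequalities reversed, and \eqref{eqn:pw} replaced by the corresponding consequence of \eqref{eqn:condition_dual}.

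I expect the delicate point to be exactly the passage from \eqref{eqn:condition} to an arbitrary permutation: since a uninorm (in particular a $T$-conorm) need not be cancellative, one cannot cancel the unchanged summands, so the local gain must be transported through monotonicity of $\oplus$, which forces the explicit regrouping of the $\oplus$-sum (legitimate only because $\oplus$ is associative and commutative) that isolates the two summands moved by an adjacent transposition. The remaining ingredients---the $n=2$ reduction for necessity and the termination of the bubble sort via the inversion count---are routine.
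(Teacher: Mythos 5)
Your proof is correct and follows essentially the same route as the paper: necessity is the same $n=2$ specialization, and for sufficiency the paper simply cites Theorem 8 of \cite{wu:rearrangement:2022} together with commutativity, associativity and monotonicity of the uninorm, which your adjacent-transposition (bubble-sort) argument reproduces in self-contained form. The only cosmetic remark is the case $n=2$, where there is no remainder $R$; there the pairwise inequality applies directly (or one may take $R$ to be the identity element $e$ of $\oplus$), so nothing is lost.
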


\begin{proof}
One direction is trivially true. For the other direction, the proof is a consequence of Theorem 8 in Ref. \cite{wu:rearrangement:2022} and the commutative, associative, and monotonicity properties of uninorms along with the nonnegativity of the unit interval domain.
\end{proof}

\begin{theorem}\label{thm:negation_RI}
Let $\otimes$ and $\oplus$ be uninorms and $\neg$ be a strong negation function.
Then $(\otimes, \oplus)$ satisfies the rearrangement inequality if and only if
$(\Phi^{\neg}(\oplus),\Phi^{\neg}(\otimes))$ satisfies the dual rearrangement inequality.
\end{theorem}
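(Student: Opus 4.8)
The plan is to reduce everything to the two-variable characterizations already in hand: by the preceding theorem, $(\otimes,\oplus)$ satisfies the rearrangement inequality exactly when \eqref{eqn:condition} holds for all $0\le x_1\le x_2\le 1$ and $0\le y_1\le y_2\le 1$, and a pair of uninorms satisfies the dual rearrangement inequality exactly when the analogue \eqref{eqn:condition_dual} holds. So it is enough to prove a single equivalence of two-variable statements: \eqref{eqn:condition} for $(\otimes,\oplus)$ holds if and only if \eqref{eqn:condition_dual} holds for the pair $(\Phi^{\neg}(\oplus),\Phi^{\neg}(\otimes))$.

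Two preliminaries would be recorded first. (i) $\Phi^{\neg}$ carries uninorms to uninorms: commutativity, associativity, and monotonicity of $\Phi^{\neg}(f)$ follow from those of $f$ together with the fact that $\neg$ is order-reversing and involutive (associativity needs $\neg\neg=\mathrm{id}$ to cancel the inner double negation), and if $e$ is the identity of $f$ then $\neg e$ is the identity of $\Phi^{\neg}(f)$; hence the phrase ``$(\Phi^{\neg}(\oplus),\Phi^{\neg}(\otimes))$ satisfies the dual rearrangement inequality'' is well posed. (ii) A strong negation is a strictly decreasing bijection of $[0,1]$ onto itself (it is continuous by the Trillas representation quoted above); consequently $(x_1,x_2)\mapsto(\neg x_2,\neg x_1)$ is a bijection of the set $\{(a_1,a_2):0\le a_1\le a_2\le 1\}$ onto itself, and likewise for the $y$'s.

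For the core step, write $\otimes'=\Phi^{\neg}(\oplus)$ and $\oplus'=\Phi^{\neg}(\otimes)$, so $u\otimes' v=\neg(\neg u\oplus\neg v)$ and $u\oplus' v=\neg(\neg u\otimes\neg v)$. Fix $0\le x_1\le x_2\le 1$ and $0\le y_1\le y_2\le 1$. I would substitute $x_i\oplus' y_j=\neg(\neg x_i\otimes\neg y_j)$ into \eqref{eqn:condition_dual} written for $(\otimes',\oplus')$, apply the outer $\otimes'$, and cancel the nested negations via involutivity, obtaining that the left side of \eqref{eqn:condition_dual} equals $\neg\bigl((\neg x_1\otimes\neg y_1)\oplus(\neg x_2\otimes\neg y_2)\bigr)$ and the right side equals $\neg\bigl((\neg x_1\otimes\neg y_2)\oplus(\neg x_2\otimes\neg y_1)\bigr)$. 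Since $\neg$ is strictly decreasing, \eqref{eqn:condition_dual} for $(\otimes',\oplus')$ is equivalent to $(\neg x_1\otimes\neg y_1)\oplus(\neg x_2\otimes\neg y_2)\ \ge\ (\neg x_1\otimes\neg y_2)\oplus(\neg x_2\otimes\neg y_1)$. Putting $a_1=\neg x_2,\ a_2=\neg x_1,\ b_1=\neg y_2,\ b_2=\neg y_1$ and using commutativity of $\oplus$ rewrites this as $(a_1\otimes b_1)\oplus(a_2\otimes b_2)\ge(a_1\otimes b_2)\oplus(a_2\otimes b_1)$, which is exactly \eqref{eqn:condition} for $(\otimes,\oplus)$ at $(a_1,a_2,b_1,b_2)$. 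By preliminary (ii) the tuple $(a_1,a_2,b_1,b_2)$ ranges over precisely the admissible ordered tuples as $(x_1,x_2,y_1,y_2)$ does, so the two quantified statements are equivalent, which gives the theorem.

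I expect the only genuine obstacle to be bookkeeping: tracking the role swap (the disjunction $\oplus$ of the original pair becomes the ``product'' $\Phi^{\neg}(\oplus)$ of the dual pair and vice versa), keeping straight which negations cancel under involutivity, and checking that $x_i\mapsto\neg x_{3-i}$ is order-preserving on index pairs so the quantifier ranges match and no extra hypotheses sneak in. No property of $\otimes$ or $\oplus$ beyond the uninorm axioms is used, and the argument is symmetric enough that the converse direction needs no separate work.
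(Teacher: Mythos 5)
Your proposal is correct and follows essentially the same route as the paper: both reduce the claim to the two-variable characterizations (Eqns.~(\ref{eqn:condition})--(\ref{eqn:condition_dual})) from the preceding theorem and then show these are exchanged under the substitution $x\mapsto\neg x$ together with the swap $(\otimes,\oplus)\mapsto(\Phi^{\neg}(\oplus),\Phi^{\neg}(\otimes))$, using involutivity and strict monotonicity of $\neg$. Your added preliminaries (that $\Phi^{\neg}$ preserves uninorms and that $\neg$ induces a bijection of ordered pairs) are points the paper leaves implicit, but they change nothing essential.
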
 
\begin{proof}
Let
$0\leq x_1\leq x_2\leq 1$ and $0\leq y_1\leq y_2\leq 1$. Let $\tilde{x}_i = \neg x_i$ and $\tilde{y}_i = \neg y_i$. Then $0\leq \tilde{x}_2\leq \tilde{x}_1\leq 1$ and $0\leq \tilde{y}_2\leq \tilde{y}_1\leq 1$. Let $\tilde{\oplus} = \Phi^{\neg}(\otimes)$ and $\tilde{\otimes} = \Phi^{\neg}(\oplus)$.
Then
we have the following:
$(x_1\otimes y_1)\oplus (x_2\otimes y_2) = \neg (\tilde{x}_1\tilde{\oplus} \tilde{y}_1)\oplus \neg (\tilde{x}_2\tilde{\oplus} \tilde{y}_2) = \neg ((\tilde{x}_1\tilde{\oplus} \tilde{y}_1)\tilde{\otimes} (\tilde{x}_2\tilde{\oplus} \tilde{y}_2))$. Similarly,
$(x_1\otimes y_2)\oplus (x_2\otimes y_1) =  \neg ((\tilde{x}_1\tilde{\oplus} \tilde{y}_2)\tilde{\otimes} (\tilde{x}_2\tilde{\oplus} \tilde{y}_1))$.
Eqn.~(\ref{eqn:condition}) is then equivalent to
$(\tilde{x}_1\tilde{\oplus} \tilde{y}_1)\tilde{\otimes} (\tilde{x}_2\tilde{\oplus} \tilde{y}_2) \leq  (\tilde{x}_1\tilde{\oplus} \tilde{y}_2)\tilde{\otimes} (\tilde{x}_2\tilde{\oplus} \tilde{y}_1)$ which is equivalent to $(\tilde{\otimes},\tilde{\oplus})$ satisfying the dual rearrangement inequality.
\end{proof}

\begin{corollary}
Let $\otimes$ be a uninorm and $\neg$ be a strong negation function. Then
$(\otimes, \Phi^{\neg}(\otimes))$ satisfies the rearrangement inequality if and only if $(\otimes, \Phi^{\neg}(\otimes))$ satisfies the dual rearrangement inequality.
\end{corollary}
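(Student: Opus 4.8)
The plan is to obtain the corollary as a direct specialization of Theorem \ref{thm:negation_RI}, taking $\oplus := \Phi^{\neg}(\otimes)$ and exploiting the fact that $\Phi^{\neg}$ is an involution when $\neg$ is strong.

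First I would record the two properties of $\Phi^{\neg}$ that the specialization relies on. (i) For any $f:[0,1]^2\rightarrow[0,1]$ one has $\Phi^{\neg}(\Phi^{\neg}(f)) = f$, which is immediate from $\neg\neg x = x$ and is already noted in the text for strong $\neg$. (ii) $\Phi^{\neg}$ maps uninorms to uninorms: commutativity, associativity, and monotonicity are inherited by a one-line check, and if $e$ is the identity element of $\otimes$ then $\neg e$ is the identity element of $\Phi^{\neg}(\otimes)$, since $\Phi^{\neg}(\otimes)(x,\neg e) = \neg(\otimes(\neg x,\neg\neg e)) = \neg(\otimes(\neg x, e)) = \neg\neg x = x$. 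Property (ii) ensures that $\oplus = \Phi^{\neg}(\otimes)$ is a uninorm, so that Theorem \ref{thm:negation_RI} actually applies to the pair $(\otimes,\oplus)$.

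Then I would invoke Theorem \ref{thm:negation_RI} with this choice of $\oplus$. The theorem asserts that $(\otimes,\oplus)$ satisfies the rearrangement inequality if and only if $(\Phi^{\neg}(\oplus),\Phi^{\neg}(\otimes))$ satisfies the dual rearrangement inequality. By property (i), $\Phi^{\neg}(\oplus) = \Phi^{\neg}(\Phi^{\neg}(\otimes)) = \otimes$, and by definition $\Phi^{\neg}(\otimes) = \oplus$, so $(\Phi^{\neg}(\oplus),\Phi^{\neg}(\otimes)) = (\otimes,\oplus) = (\otimes,\Phi^{\neg}(\otimes))$. Substituting, the biconditional of Theorem \ref{thm:negation_RI} becomes precisely the assertion that $(\otimes,\Phi^{\neg}(\otimes))$ satisfies the rearrangement inequality if and only if it satisfies the dual rearrangement inequality, which is the claim.

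I do not anticipate a real obstacle: once the involutivity of $\Phi^{\neg}$ is in hand, the corollary is essentially a substitution into Theorem \ref{thm:negation_RI}. The only point that deserves explicit mention is the verification in property (ii) that $\Phi^{\neg}(\otimes)$ is again a uninorm, so that the hypotheses of Theorem \ref{thm:negation_RI} are genuinely satisfied; the remainder is bookkeeping with the definition of the dual.
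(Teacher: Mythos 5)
Your proposal is correct and matches the paper's intent: the corollary is stated without proof precisely because it is the specialization $\oplus := \Phi^{\neg}(\otimes)$ of Theorem \ref{thm:negation_RI}, using the involutivity $\Phi^{\neg}(\Phi^{\neg}(\otimes)) = \otimes$ for strong $\neg$, exactly as you argue. Your extra check that $\Phi^{\neg}(\otimes)$ is again a uninorm (with identity $\neg e$) is a sensible piece of diligence the paper leaves implicit, but it introduces no new idea beyond the paper's route.
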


We first simplify the analysis of Eqns.~(\ref{eqn:condition}-\ref{eqn:condition_dual}) by decomposing it into two steps. First we give conditions under which they hold when $\oplus$ is replaced with $+$. We then give conditions under which this implies that Eqns.~(\ref{eqn:condition}-\ref{eqn:condition_dual}) also hold.

\begin{definition} \label{def:propA}
A function $f:[0,1]^2\rightarrow [0,1]$ satisfies property $A$ if for all $0\leq x\leq y\leq z\leq w\leq 1$
\begin{equation}
w+x\leq y+z\Rightarrow f(x,w) \leq  f(y,z) 
\label{eqn:propA}
\end{equation}

$f$ satisfies property $A'$ if for all $0\leq x\leq y\leq z\leq w\leq 1$
\begin{equation}
w+x\geq y+z\Rightarrow  f(x,w) \geq  f(y,z) 
\label{eqn:propA'}
\end{equation}
\end{definition}

\begin{definition} \label{def:propB}
A function $f:[0,1]^2\rightarrow [0,1]$ satisfies property $B$ if for all $0\leq x\leq y \leq 1$ and $0\leq z\leq w\leq 1$
\begin{equation}
f(x,w) - f(x,z) \leq f(y,w)-f(y,z)
\label{eqn:propB}
\end{equation}

$f$ satisfies property $B'$ if for all $0\leq x\leq y \leq 1$ and $0 \leq z\leq w\leq 1$
\begin{equation}
f(x,w) - f(x,z) \geq f(y,w)-f(y,z)
\label{eqn:propB'}
\end{equation}
\end{definition}

It is clear that if $f$, $g$ both satisfy the same property in Defns \ref{def:propA}-\ref{def:propB}, then that property is satisfied by $\alpha f+(1-\alpha)g$ as well for all  $\alpha \in [0,1]$.

Property $B$ is also called $2$-increasing and is used in the definition of copulas \cite{janssens:t-norm:2004}.
\begin{definition}
A function $f:[0,1]^2\rightarrow [0,1]$ is called a {\em copula} if it satisfies neutrality of $1$, is monotonic and satisfies property $B$. 
\end{definition}
Property $B$ implies that $f$ is uniformly continuous.
For instance, $\min$, product and \L{}ukasiewicz are copulas. Furthermore, all copulas are bounded pointwise by \L{}ukasiewicz and $\min$. Some families of $T$-norms that are copulas can be found in Ref. \cite{janssens:t-norm:2004}. 

\begin{theorem}[\cite{moynihan:copula:1978,nelsen:copulas:2006,BACIGAL:additive:2015}]
A $T$-norm is an Archimedean copula if and only if it has a convex additive generator.
\label{thm:arch_copula}
\end{theorem}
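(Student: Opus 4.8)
The plan is to prove the two implications separately, working in both with the pseudo-inverse $h=\mu^{(-1)}$ of the (candidate) additive generator $\mu$, since it is on $h$ that convexity and property $B$ are most transparently related. Throughout I will use that for a $T$-norm the only clause of ``copula'' that is not already built in is property $B$, together with the elementary fact that for a strictly decreasing bijection between real intervals, convexity of the inverse is equivalent to convexity of the map (apply the order-reversing map to the desired Jensen inequality).

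For the implication from a convex generator, suppose the $T$-norm $f$ is given by $f(x,y)=h(\mu(x)+\mu(y))$ with $\mu$ a convex additive generator and $h=\mu^{(-1)}$. Archimedeanity is the theorem quoted above (any additive generator suffices for that), so only property $B$ is at stake. First I would check that $h$ is nonincreasing and convex on all of $[0,\infty]$: on $[0,\mu(0)]$ it equals the genuine inverse $\mu^{-1}$, convex because $\mu$ is; beyond $\mu(0)$ it is constantly $0$, which attaches convexly since the one-sided slope of $h$ at $\mu(0)$ is $\le 0$. Now take $x\le y$, $z\le w$ and put $\alpha=\mu(y)\le\beta=\mu(x)$ and $\gamma=\mu(w)\le\delta=\mu(z)$. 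Property $B$ is exactly $h(\alpha+\gamma)+h(\beta+\delta)\ge h(\alpha+\delta)+h(\beta+\gamma)$, and since $\alpha+\gamma$ and $\beta+\delta$ are the least and greatest of the four sums while $(\alpha+\gamma)+(\beta+\delta)=(\alpha+\delta)+(\beta+\gamma)$, this is the standard majorization consequence of convexity of $h$: write each of $\alpha+\delta$ and $\beta+\gamma$ as a convex combination $\lambda(\alpha+\gamma)+(1-\lambda)(\beta+\delta)$ of the two extremes, with the two values of $\lambda$ complementary, and add the resulting Jensen inequalities. The cases where a sum exceeds $\mu(0)$ collapse to the same inequality because $h$ vanishes there.

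For the converse, let $f$ be an Archimedean copula. By property $B$, $f$ is (uniformly) continuous, hence a continuous Archimedean $T$-norm, so by the classical representation of continuous Archimedean $T$-norms (the references cited with the statement) it has a continuous additive generator $\mu$; let $h=\mu^{(-1)}$ be its pseudo-inverse, continuous and strictly decreasing from $h(0)=1$ down to $0$ on $[0,\mu(0)]$ and $0$ thereafter. Specialize property $B$ to $w=1$: using $f(\cdot,1)=\mathrm{id}$ and $\mu(1)=0$, and writing $a=\mu(y)\le b=\mu(x)$ and $d=\mu(z)$, it becomes $h(a)-h(b)\ge h(a+d)-h(b+d)$ for all admissible $a\le b$ and $d\ge 0$. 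With $s=b-a$ this says $t\mapsto h(t)-h(t+s)$ is nonincreasing for every $s>0$, which together with continuity is precisely convexity of $h$ on $[0,\mu(0)]$, and it extends to $[0,\infty]$ as before. Since $h|_{[0,\mu(0)]}$ is a strictly decreasing convex bijection with inverse $\mu$, $\mu$ is convex, so $f$ has a convex additive generator.

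The main obstacle is the converse, and in particular its one nontrivial external input: that a continuous Archimedean $T$-norm admits a continuous additive generator at all (Ling's representation theorem), on which everything else rests; some care is also needed to confirm that the generator it furnishes meets the precise bookkeeping in the definition of additive generator (the range and right-continuity conditions) and to handle $h$ at and past $\mu(0)$, but those steps are routine.
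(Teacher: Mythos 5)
Your proposal is correct, but note that the paper does not prove this statement at all: Theorem~\ref{thm:arch_copula} is imported verbatim from the cited literature (Moynihan, Nelsen, Bacig\'al et al.), so there is no in-paper argument to compare against. What you have written is essentially a reconstruction of the standard proof from those sources. In the forward direction your reduction of property $B$ to the majorization inequality $h(\beta+\gamma)+h(\alpha+\delta)\leq h(\alpha+\gamma)+h(\beta+\delta)$ for the convex pseudo-inverse $h=\mu^{(-1)}$ is sound (the sums are correctly paired, the two extreme sums bracket the two middle ones with equal totals, and your convex extension of $h$ by $0$ past $\mu(0)$ disposes of the clipped cases uniformly); Archimedeanity is correctly delegated to the paper's earlier theorem that any additive generator forces it. In the converse, the specialization $w=1$ of property $B$, giving that $t\mapsto h(t)-h(t+s)$ is nonincreasing, is exactly the Wright-convexity trick, and with continuity of $h$ this yields convexity of $h$ on $[0,\mu(0)]$ and hence of $\mu$; your surjectivity of $\mu$ onto $[0,\mu(0)]$ makes the needed choices of $a,b,d$ available. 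You are right that the one substantial borrowed ingredient is the representation theorem for continuous Archimedean $T$-norms (continuity itself coming from the Lipschitz property of copulas, which the paper also asserts after Definition~\ref{def:propB}); since the paper itself treats the whole theorem as a citation, leaning on that classical result, with the bookkeeping you flag, is entirely in keeping with the paper's level of rigor. What your write-up buys, relative to the paper, is a self-contained account of the only nontrivial equivalence (convexity of $\mu$ $\Leftrightarrow$ property $B$) that the paper leaves implicit.
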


\begin{lemma}
If all $f_i$ satisfies property $A$, then the ordinal sum $\Asterisk_{i} (f_i,a_i,b_i)$ satisfies property $A$. If all $f_i$ satisfies property $B$, then $\Asterisk_{i} (f_i,a_i,b_i)$ satisfies property $B$.
\label{lem:ordinal_sum} 
\end{lemma}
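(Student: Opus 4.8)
The plan is to prove the two statements by separate case analyses, in each classifying the arguments of $g=\Asterisk_i(f_i,a_i,b_i)$ according to which diagonal square $[a_i,b_i]^2$ they land in. Two structural facts drive everything. First, since each $T$-norm $f_i$ agrees with $\min$ on the boundary of $[0,1]^2$, we have $g(p,q)=\min(p,q)$ unless $p,q\in(a_i,b_i)$ for one and the same $i$. Second, on the \emph{closed} square $[a_i,b_i]^2$ the function $g$ is the affine rescaling $g(p,q)=a_i+(b_i-a_i)f_i\bigl(\tfrac{p-a_i}{b_i-a_i},\tfrac{q-a_i}{b_i-a_i}\bigr)$. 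I will also use that $g\le\min$ pointwise, that $g$ is monotone, and -- crucially for keeping the analysis finite -- that the $(a_i,b_i)$ are pairwise disjoint, so two distinct squares $[a_i,b_i]^2$ and $[a_j,b_j]^2$ meet in at most one point.

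\emph{Property $A$.} Let $0\le x\le y\le z\le w\le 1$ with $w+x\le y+z$; we must show $g(x,w)\le g(y,z)$. If both $(x,w)$ and $(y,z)$ lie in the same square $[a_i,b_i]^2$, passing to relative coordinates $p\mapsto\tfrac{p-a_i}{b_i-a_i}$ (an affine change of variables preserving both the ordering and the inequality $w+x\le y+z$) reduces the claim to property $A$ for $f_i$. If $g(x,w)=x$ and $g(y,z)=y$, then $x\le y$ suffices. Since $(x,w)\in[a_j,b_j]^2$ forces $a_j\le x\le y\le z\le w\le b_j$ and hence $(y,z)\in[a_j,b_j]^2$ as well, the one remaining configuration is $g(x,w)=x$ (so $(x,w)$ is in no square) together with $y,z\in(a_i,b_i)$ for some $i$. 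If $x<a_i$, then $g(y,z)\ge a_i>x=g(x,w)$. Otherwise $a_i\le x$, so $x\in[a_i,b_i]$; if moreover $w\le b_i$ then $(x,w)\in[a_i,b_i]^2$ and we are back in the same-square case, so assume $w>b_i$. Then $w+x\le y+z$ with $w\ge b_i$ gives $\tfrac{x-a_i}{b_i-a_i}\le\tfrac{y-a_i}{b_i-a_i}+\tfrac{z-a_i}{b_i-a_i}-1$, and applying property $A$ of $f_i$ to the quadruple $\tfrac{x-a_i}{b_i-a_i}\le\tfrac{y-a_i}{b_i-a_i}\le\tfrac{z-a_i}{b_i-a_i}\le 1$ -- whose outer pair has $f_i$-value $\tfrac{x-a_i}{b_i-a_i}$ by neutrality of $1$ -- yields $x\le a_i+(b_i-a_i)f_i\bigl(\tfrac{y-a_i}{b_i-a_i},\tfrac{z-a_i}{b_i-a_i}\bigr)=g(y,z)$. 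The degenerate cases ($x=y$, which forces $z=w$; or $z=w$) follow at once from monotonicity of $g$.

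\emph{Property $B$.} Property $B$ for $g$ is exactly the assertion that the rectangle functional $V_g\bigl([x,y]\times[z,w]\bigr):=g(y,w)-g(y,z)-g(x,w)+g(x,z)$ is $\ge 0$ for $x\le y$, $z\le w$, and $V_g$ is additive when a rectangle is cut by a horizontal or vertical line. First I reduce to a finite ordinal sum: since $\sum_i(b_i-a_i)\le 1$, for each $\varepsilon>0$ the index set $G_\varepsilon=\{i:b_i-a_i\ge\varepsilon\}$ is finite, and letting $g_\varepsilon$ be the ordinal sum over $G_\varepsilon$ only, one checks $\lVert g-g_\varepsilon\rVert_\infty\le\varepsilon$ (on a discarded square both values lie in $[a_i,\min(p,q)]$, and elsewhere they coincide), hence $\lvert V_g(R)-V_{g_\varepsilon}(R)\rvert\le 4\varepsilon$; so it suffices to prove $V_{g_\varepsilon}(R)\ge 0$ and let $\varepsilon\to 0$. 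For the finite ordinal sum $g_\varepsilon$, subdivide $R$ along every line $p=a_i$, $p=b_i$, $q=a_i$, $q=b_i$ with $i\in G_\varepsilon$ that meets it; each resulting sub-rectangle $S$ has both coordinate intervals with interiors disjoint from $\{a_i,b_i:i\in G_\varepsilon\}$, so either $S\subseteq[a_i,b_i]^2$ for a single $i$, where $V_{g_\varepsilon}(S)=(b_i-a_i)V_{f_i}(S')\ge 0$ by property $B$ of $f_i$ after rescaling, or $g_\varepsilon\equiv\min$ on $S$, where $V_{g_\varepsilon}(S)=V_{\min}(S)\ge 0$ because $\min$ is $2$-increasing. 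Summing over $S$ gives $V_{g_\varepsilon}(R)\ge 0$. (This is the classical fact that an ordinal sum of copulas is a copula.)

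I expect the only real friction to be bookkeeping: checking that boundary positions -- an argument equal to some $a_i$ or $b_i$, or a sub-rectangle abutting the diagonal -- never produce a configuration outside the ones enumerated. These are disposed of by disjointness of the intervals (two squares overlap in at most one point) together with the observation that on the closed square $[a_i,b_i]^2$ the rescaled-$f_i$ formula already agrees with $\min$ along the boundary, so $g$ and each $g_\varepsilon$ are continuous and no case is genuinely ambiguous.
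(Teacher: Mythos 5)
Your proof is correct, but it is organized quite differently from the paper's. For property $A$, the paper first treats an ordinal sum of exactly two summands partitioning $[0,1]$ at a point $a$ via a five-case analysis, then extends to finitely many summands by induction and to countably many by a pointwise-limit argument borrowed from Mesiar--Sempi; you instead handle an arbitrary (even countably infinite, non-covering) family in one pass, by classifying where $(x,w)$ and $(y,z)$ sit relative to the closed squares $[a_i,b_i]^2$ and using that the ordinal sum agrees with $\min$ off the open squares. Your key step in the mixed case ($x\geq a_i$, $w>b_i$), deriving $1+\tfrac{x-a_i}{b_i-a_i}\leq\tfrac{y-a_i}{b_i-a_i}+\tfrac{z-a_i}{b_i-a_i}$ and invoking property $A$ of $f_i$ with neutrality of $1$, is exactly the paper's trick in its case $z\leq a\leq w$, so the essential idea coincides, but your framing removes the need for both the induction and the limiting argument. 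For property $B$ the paper simply cites the fact that an ordinal sum of copulas is a copula, whereas you reprove it: reduction to finitely many summands by a uniform $\varepsilon$-approximation, then grid subdivision and additivity of the rectangle volume $V_g$, with each cell controlled either by $2$-increasingness of $f_i$ (after rescaling) or of $\min$. That costs you some bookkeeping (your checks about boundary agreement with $\min$ and disjointness of the squares are indeed what makes the case split exhaustive) but buys a self-contained argument independent of the cited literature. One phrasing quibble: ``$g(x,w)=x$ (so $(x,w)$ is in no square)'' is not an implication -- $g$ can equal $\min$ on a square's boundary -- but your subsequent subcases ($w\leq b_i$ rerouted to the same-square case) make the enumeration exhaustive anyway.
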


\begin{proof}
With regards to property $B$, the proof can be found in Ref. \cite{Mesiar:ordinalsums:2010} as the ordinal sum of copulas is a copula.
For property $A$ the proof is similar. First consider the case of $2$ intervals.
Let $f_1$ and $f_2$ be $T$-norms satisfying property $A$. Let $f$ be the ordinal sum of
$(f_1,(0,a))$ and $(f_2,(a,1))$. Let
$0\leq x\leq y \leq z\leq w\leq 1$. $w+x\leq y+z$ implies that  $\frac{w-a}{1-a}+\frac{x-a}{1-a}\leq\frac{y-a}{1-a}+\frac{z-a}{1-a}$.
If $x\geq a$, then $f(x,w)-f(y,z) = (1-a)(f_2(\frac{x-a}{1-a},\frac{w-a}{1-a})-f_2(\frac{y-a}{1-a},\frac{z-a}{1-a}))$ which is nonpositive since $f_2$ satisfies property $A$. If $w\leq a$, then $f(x,w)-f(y,z) = a\left(f_1(\frac{x}{a},\frac{w}{a})-f_1(\frac{y}{a},\frac{z}{a})\right)$ which again is nonpositive since $f_1$ satisfies property $A$.
If $x\leq a\leq y$, then $f(x,w) = \min(x,w) = x$ and $f(y,z) \geq a$ and thus  $f(x,w)-f(y,z) \leq x-a\leq 0$.
If $z\leq a\leq w$, then $f(x,w) = \min(x,w) = x$ and $a+x\leq w+x\leq y+z$, i.e. $1+\frac{x}{a}\leq \frac{y}{a}+\frac{z}{a}$.
$f_1$ satisfying property $A$ implies that $\frac{x}{a} = f_1(1,\frac{x}{a}) \leq f_1(\frac{y}{a},\frac{z}{a}) = \frac{1}{a}f(y,z)$. Thus $f(x,w)\leq f(y,z)$.
If $y\leq a \leq z$, then $f(x,w)-f(y,z) = \min(x,w)-\min(y,z) = x-y\leq 0$. Thus in all cases, Eq. (\ref{eqn:propA}) is satisfied.
The case of a finite number of terms in the ordinal sum then follows from induction.
For the case of countable infinite number of terms, the same approach as in Ref. \cite{Mesiar:ordinalsums:2010} can be used to construct a sequence of ordinal sums of finite terms that converges pointwise to the desired ordinal sums and Eq. (\ref{eqn:propA}) is preserved.
\end{proof}

\begin{lemma} \label{lem:propAB_dual}
$f$ satisfies property $A$ if and only if $\Phi(f)$ satisfies property $A'$, $f$ satisfies property $B$ if and only if $\Phi(f)$ satisfies property $B'$.
\end{lemma}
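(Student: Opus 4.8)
The plan is to prove each of the two ``if and only if'' statements by combining a single order-reversing substitution with the fact, recorded earlier in the excerpt, that the standard negation $x\mapsto 1-x$ is strong and hence $\Phi(\Phi(f)) = f$. Consequently it suffices to establish, for each property, the one implication ``$f$ satisfies $A$ $\Rightarrow$ $\Phi(f)$ satisfies $A'$'' together with its mirror ``$f$ satisfies $A'$ $\Rightarrow$ $\Phi(f)$ satisfies $A$''; applying the second with $\Phi(f)$ in place of $f$ and using involutivity of $\Phi$ then yields the converse of the first, and likewise for $B$ and $B'$.

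\textbf{Property $A$ / $A'$.} Write $g = \Phi(f)$, so $g(u,v) = 1 - f(1-u,1-v)$. Assume $f$ satisfies property $A$ and fix $0 \le x \le y \le z \le w \le 1$ with $w + x \ge y + z$. Put $x' = 1-w$, $y' = 1-z$, $z' = 1-y$, $w' = 1-x$; then $0 \le x' \le y' \le z' \le w' \le 1$ and $w' + x' = 2 - (w+x) \le 2 - (y+z) = y' + z'$, so property $A$ for $f$ gives $f(x',w') \le f(y',z')$. Using commutativity of $f$ — this is the one place commutativity enters, since property $A$ compares the ``extreme'' pair $(x,w)$ with the ``middle'' pair $(y,z)$, so after the substitution the two arguments of $f$ come out in decreasing order and must be swapped back — we obtain $f(1-x,1-w) = f(w',x') = f(x',w') \le f(y',z') = f(1-y,1-z)$, whence $g(x,w) = 1 - f(1-x,1-w) \ge 1 - f(1-y,1-z) = g(y,z)$, which is property $A'$. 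The identical computation with every inequality reversed shows that if $f$ satisfies $A'$ then $\Phi(f)$ satisfies $A$, and applying this to $\Phi(f)$ and invoking $\Phi(\Phi(f))=f$ delivers the full equivalence.

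\textbf{Property $B$ / $B'$.} The same substitution works and commutativity is \emph{not} needed, because property $B$ keeps each variable in its own slot of $f$. With $g = \Phi(f)$, $0 \le x \le y \le 1$ and $0 \le z \le w \le 1$, direct expansion gives $g(x,w) - g(x,z) = f(1-x,1-z) - f(1-x,1-w)$ and the analogous identity with $y$ replacing $x$. Setting $p = 1-y \le q = 1-x$ and $r = 1-w \le s = 1-z$, the inequality $g(x,w) - g(x,z) \ge g(y,w) - g(y,z)$ demanded by property $B'$ becomes exactly $f(p,s) - f(p,r) \le f(q,s) - f(q,r)$, which is precisely property $B$ for $f$ applied to the tuple $(p,q,r,s)$. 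Reversing all inequalities gives ``$f$ satisfies $B'$ $\Rightarrow$ $\Phi(f)$ satisfies $B$'', and involutivity of $\Phi$ again closes the equivalence.

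I do not anticipate any genuine obstacle. The only points requiring care are the appeal to commutativity in the $A$/$A'$ case — legitimate since the functions of interest (uninorms, in particular all $T$-norms and $T$-conorms) are commutative — and the bookkeeping of which variable the map $u\mapsto 1-u$ sends to which, so that the monotonicity and additivity hypotheses of properties $A$ and $B$ are invoked on tuples ordered exactly as those definitions require.
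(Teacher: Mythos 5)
Your proof is correct and takes essentially the same route as the paper's: the order-reversing substitution $u\mapsto 1-u$ turns property $A$ (resp.\ $B$) for $f$ into property $A'$ (resp.\ $B'$) for $\Phi(f)$, with the converse obtained via the involutivity $\Phi(\Phi(f))=f$ where the paper simply invokes an ``analogous argument.'' Your explicit note that commutativity of $f$ is needed to realign the arguments in the $A$/$A'$ case is a detail the paper passes over silently, and it is harmless in context since all functions considered there are uninorms.
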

\begin{proof}
Suppose $f$ satisfies property $A$. Let $0\leq x\leq y\leq z\leq w\leq 1$ and let $x'= 1-x$, $y'=1-y$, $z'=1-z$, $w' = 1-w$.
Then $0\leq w'\leq z'\leq y'\leq x'\leq 1$. Suppose $w+x \geq y+z$, then $w'+x' \leq y'+z'$. Since property $A$ implies $f(y',z') \geq f(x',w')$, $\Phi(f)(y,z) = 1-f(y',z') \leq 1-f(x',w') = \Phi(f)(x,w)$, and $\Phi(f)$ satisfies property $A'$. An analogous argument show that if $\Phi(f)$ satisfies property $A'$, then $f$ satisfies property $A$.

Suppose $f$ satisfies property $B$. Let $0\leq x\leq y \leq 1$ and $0\leq z\leq w\leq 1$. Thus $0\leq y'\leq x' \leq 1$ and $0\leq w' \leq z' \leq 1$.
Property $B$ implies $f(y',z')-f(y',w') \leq f(x',z')-f(x',w')$. This implies that $-\Phi(f)(y,z)+\Phi(f)(y,w) \leq -\Phi(f)(x,z)+\Phi(f)(x,w)$, i.e. $\Phi(f)$ satisfies property $B'$. The other direction follows analogously.
\end{proof}

\begin{theorem} \label{thm:gen_condition}
If $f$ is a $T$-norm with an additive generator $\mu$ such that
for all $0\leq x\leq y\leq 1$, and  $h\geq 0$ such that $y+h\leq 1$, we have $\mu(x)-\mu(x+h)\geq \mu(y)-\mu(y+h)$, then
$f$ satisfies property $A$.
\end{theorem}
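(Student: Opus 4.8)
The plan is to work directly with the additive‑generator representation $f(x,y)=\mu^{(-1)}(\mu(x)+\mu(y))$ and reduce property $A$ to one inequality about sums of generator values. Recall that the pseudo‑inverse $\mu^{(-1)}$ of the strictly decreasing $\mu$ is nonincreasing on $[0,\infty]$; hence, to establish $f(x,w)\le f(y,z)$ it suffices to show $\mu(x)+\mu(w)\ge\mu(y)+\mu(z)$, equivalently $\mu(x)-\mu(y)\ge\mu(z)-\mu(w)$ (all the quantities involved being finite, the case $\mu(x)=\infty$ being handled separately below).

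So fix $0\le x\le y\le z\le w\le 1$ with $w+x\le y+z$, and set $h=w-z\ge 0$. The assumption $w+x\le y+z$ is exactly the statement $h\le y-x$, so $x+h\le y$. I would then split
$$\mu(x)-\mu(y)=\big(\mu(x)-\mu(x+h)\big)+\big(\mu(x+h)-\mu(y)\big).$$
The second bracket is nonnegative because $x+h\le y$ and $\mu$ is nonincreasing. For the first bracket I apply the hypothesis on $\mu$ to the pair $x\le z$ with common increment $h$; this is admissible since $z+h=w\le 1$, and it gives $\mu(x)-\mu(x+h)\ge\mu(z)-\mu(z+h)=\mu(z)-\mu(w)$. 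Adding the two estimates yields $\mu(x)-\mu(y)\ge\mu(z)-\mu(w)$, i.e. $\mu(x)+\mu(w)\ge\mu(y)+\mu(z)$; applying the nonincreasing map $\mu^{(-1)}$ to both sides (both arguments lie in $\mathrm{Range}(\mu)\cup[\mu(0),\infty]$ by the definition of an additive generator) gives $f(x,w)\le f(y,z)$, which is property $A$.

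The only subtleties are bookkeeping: (i) $\mu^{(-1)}$ is in general neither strictly monotone nor continuous, so I rely only on its being nonincreasing, which is all that is used; (ii) if $\mu(x)=\infty$ (possible only at $x=0$) then $f(x,w)=\mu^{(-1)}(\infty)=0\le f(y,z)$ and there is nothing to prove, so one may assume all generator values finite; and (iii) one must verify the admissibility condition before invoking the hypothesis on $\mu$, but this is immediate since $z+h=w\le 1$. I do not expect a genuine obstacle here: the content of the theorem is simply that the convexity‑type condition imposed on $\mu$ propagates to the superadditivity inequality $\mu(x)+\mu(w)\ge\mu(y)+\mu(z)$ that underlies property $A$, in analogy with Theorem~\ref{thm:arch_copula} for property $B$.
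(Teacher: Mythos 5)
Your proof is correct and takes essentially the same route as the paper: both reduce property $A$ to the inequality $\mu(x)+\mu(w)\geq\mu(y)+\mu(z)$ by applying the hypothesis on $\mu$ with increment $h=w-z$ and then using the nonincreasing pseudo-inverse $\mu^{(-1)}$. The only cosmetic difference is that the paper applies the hypothesis at the shifted point $x'=y-w+z$ and finishes with monotonicity of $f$ (via $f(x,w)\leq f(x',w)$), whereas you apply it at $x$ itself and absorb the slack $x+h\leq y$ through monotonicity of $\mu$.
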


\begin{proof}
Let $0\leq x\leq y\leq z\leq w\leq 1$ and
$y-x\geq w-z$. Define $x' = y-w+z \geq x$. The condition on $\mu$ shows that $\mu(x')-\mu(y) \geq \mu(z)-\mu(w)$ and
$\mu(x') + \mu(w) \geq \mu(y) +\mu(z)$. Since $\mu^{(-1)}$ is nonincreasing \cite{KLEMENT:pseudo-inverse:1999}, $f(x,w)\leq f(x',w) = \mu^{(-1)}(\mu(x') + \mu(w))\leq  \mu^{(-1)}(\mu(y) + \mu(z)) = f(y,z)$.
\end{proof}

\begin{corollary}
If $f$ is a $T$-norm with an differentiable additive generator $\mu$ such that
$\frac{d\mu(x)}{dx}$ is nondecreasing, then $f$ satisfies property $A$.
\label{cor:addgenerator}
\end{corollary}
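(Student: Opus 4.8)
The plan is simply to verify that the hypothesis of Theorem~\ref{thm:gen_condition} is satisfied and then quote that theorem. Concretely, it suffices to show that for all $0\le x\le y\le 1$ and all $h\ge 0$ with $y+h\le 1$ we have $\mu(x)-\mu(x+h)\ge \mu(y)-\mu(y+h)$.

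Since $\mu$ is differentiable, I would rewrite each difference as an integral of the derivative: $\mu(x)-\mu(x+h)=-\int_0^h \mu'(x+s)\,ds$ and $\mu(y)-\mu(y+h)=-\int_0^h \mu'(y+s)\,ds$. For each fixed $s\in[0,h]$ we have $x+s\le y+s$, so the assumption that $\mu'$ is nondecreasing gives $\mu'(x+s)\le\mu'(y+s)$, hence $-\mu'(x+s)\ge-\mu'(y+s)$. Integrating this pointwise inequality over $s\in[0,h]$ yields $\mu(x)-\mu(x+h)\ge\mu(y)-\mu(y+h)$, which is precisely the condition required by Theorem~\ref{thm:gen_condition}; that theorem then gives property $A$ for $f$. (Note that one does not even need $\mu'\le 0$ here, only its monotonicity; $\mu'\le 0$ holds automatically because $\mu$ is strictly decreasing.)

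The only point needing a word of care is the endpoint $0$, where an additive generator is allowed to take the value $+\infty$: there ``differentiable'' should be read as differentiable on $(0,1]$ with $\mu'$ nondecreasing on that interval, the integral identity is valid for $x>0$, and the case $x=0$ is either trivial under the reading $\mu(0)-\mu(h)=+\infty\ge\mu(y)-\mu(y+h)$ when $\mu(0)=\infty$, or obtained by letting $x\downarrow 0$ and using right-continuity of $\mu$ at $0$. I do not anticipate a genuine obstacle; the substance of the corollary is just the observation that the hypothesis of Theorem~\ref{thm:gen_condition} is exactly convexity of $\mu$ (a differentiable function has a nondecreasing derivative iff it is convex), which also dovetails with Theorem~\ref{thm:arch_copula}.
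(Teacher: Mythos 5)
Your proposal is correct and follows exactly the route the paper intends: the corollary is stated as an immediate consequence of Theorem~\ref{thm:gen_condition}, and your verification that a nondecreasing $\mu'$ yields $\mu(x)-\mu(x+h)\geq\mu(y)-\mu(y+h)$ (equivalently, convexity of $\mu$) is precisely the omitted step. Your added care about the endpoint $0$ and the possible value $\mu(0)=\infty$ is sensible but does not change the substance of the argument.
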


\begin{corollary}
Let $f$ be a $T$-norm. If $f$ is an Archimedean copula, then $f$ satisfies both property $A$ and $B$.
\label{cor:arch_copula}
\end{corollary}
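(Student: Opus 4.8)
The plan is to treat the two properties separately: property $B$ will be essentially immediate from the definition of a copula, while property $A$ will be obtained by combining the structure theorem for Archimedean copulas (Theorem \ref{thm:arch_copula}) with the generator criterion of Theorem \ref{thm:gen_condition}.

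For property $B$, I would simply observe that, by hypothesis, $f$ is an Archimedean \emph{copula}, and a copula satisfies property $B$ by definition (property $B$ is exactly the $2$-increasing condition built into the notion of copula). So there is nothing further to do for property $B$.

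For property $A$, I would first invoke Theorem \ref{thm:arch_copula} to produce a \emph{convex} additive generator $\mu$ of $f$. The task then reduces to checking the hypothesis of Theorem \ref{thm:gen_condition}, namely that $\mu(x)-\mu(x+h)\geq \mu(y)-\mu(y+h)$ whenever $0\leq x\leq y\leq 1$, $h\geq 0$, and $y+h\leq 1$. For $h=0$ this is trivial, and for $h>0$ it is the statement that the difference quotient $s(a,b)=\frac{\mu(b)-\mu(a)}{b-a}$ of a convex function is monotone in both arguments: since $x\leq y$ and $x+h\leq y+h$, convexity gives $s(x,x+h)\leq s(y,y+h)$, and multiplying by $h\geq 0$ and negating yields the desired inequality. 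Theorem \ref{thm:gen_condition} then delivers property $A$ directly, and the corollary follows.

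The main (and essentially only) delicate point is the bookkeeping around additive generators that take the value $+\infty$: in the strict Archimedean case (e.g. $\mu(x)=-\ln x$ for the product $T$-norm) one has $\mu(0)=+\infty$, so the convex-difference-quotient argument must be applied on $(0,1]$ where $\mu$ is finite, and the boundary case $x=0$ handled separately — there the inequality reads $+\infty=\mu(0)-\mu(h)\geq \mu(y)-\mu(y+h)$, which holds trivially. Once this case distinction is made, the hypothesis of Theorem \ref{thm:gen_condition} holds in full generality. (If one prefers, and $\mu$ happens to be differentiable, convexity makes $\tfrac{d\mu}{dx}$ nondecreasing and Corollary \ref{cor:addgenerator} applies, but routing through Theorem \ref{thm:gen_condition} avoids any smoothness assumption.)
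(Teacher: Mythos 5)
Your proposal is correct and follows essentially the same route as the paper: property $B$ is immediate from the definition of a copula, and property $A$ comes from Theorem \ref{thm:arch_copula} (convex additive generator) together with Theorem \ref{thm:gen_condition}, the only cosmetic difference being that you verify the generator condition via monotonicity of difference quotients while the paper writes out the equivalent explicit convex-combination inequalities (your extra remark on the $\mu(0)=\infty$ case is a harmless refinement).
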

\begin{proof}
Since the additive generator $\mu$ of an Archimedean copula is convex and strictly decreasing by Theorem \ref{thm:arch_copula},
$\mu(x+h) \leq \lambda \mu(x) + (1-\lambda)(\mu(y+h))$ where $\lambda = \frac{y-x}{y-x+h}$.
Similarly,
$\mu(y) \leq \kappa \mu(x) + (1-\kappa)(\mu(y+h))$ where $\kappa = \frac{h}{y-x+h} = 1-\lambda$.
This implies that $\mu(x)-\mu(x+h) \geq \kappa \mu(x) - \kappa\mu(y+h) \geq \mu(y)-\mu(y+h)$
and the result follows from Theorem \ref{thm:gen_condition}.
\end{proof}

\begin{lemma}
The \L{}ukasiewicz $T$-norm $T_L$ satisfies property $A'$ and the bounded sum $T_L'$ satisfies property $A$. 
\end{lemma}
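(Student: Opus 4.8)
The plan is to reduce the two assertions to each other by duality and then settle the remaining one by a one-line monotonicity argument. First I would record the duality identity: under the standard negation $x\mapsto 1-x$,
\[
\Phi(T_L)(x,y) = 1 - T_L(1-x,1-y) = 1 - \max\bigl((1-x)+(1-y)-1,\,0\bigr) = 1 - \max(1-x-y,\,0) = \min(x+y,\,1) = T_L'(x,y),
\]
so $T_L'$ is exactly the dual of $T_L$. Hence by Lemma~\ref{lem:propAB_dual}, $T_L$ satisfies property $A'$ if and only if $T_L' = \Phi(T_L)$ satisfies property $A$, and it suffices to prove just one of the two halves.

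Next I would verify directly that $T_L$ satisfies property $A'$. Fix $0\le x\le y\le z\le w\le 1$ with $w+x\ge y+z$. Then $x+w-1\ge y+z-1$, and since $t\mapsto\max(t,0)$ is nondecreasing, $T_L(x,w)=\max(x+w-1,0)\ge\max(y+z-1,0)=T_L(y,z)$, which is exactly Eq.~(\ref{eqn:propA'}). Symmetrically --- and equivalently, via the previous paragraph --- for $T_L'$ one uses that $t\mapsto\min(t,1)$ is nondecreasing: if $w+x\le y+z$ then $\min(x+w,1)\le\min(y+z,1)$, i.e. Eq.~(\ref{eqn:propA}) holds.

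I do not expect any real obstacle here. The only points that warrant care are getting the direction of the implication straight --- property $A'$ pairs the more ``spread-out'' argument (the one with larger $w+x$) with the larger value of $f$, which is consistent with the fact that on its interior $T_L$ coincides with ordinary addition --- and checking the identity $\Phi(T_L)=T_L'$ so that Lemma~\ref{lem:propAB_dual} is applicable. It is worth noting in passing that the same monotonicity observation shows $T_L$ in fact satisfies property $A$ as well (in agreement with Table~\ref{tbl:t_norm}) and $T_L'$ property $A'$, but only the stated halves are used subsequently.
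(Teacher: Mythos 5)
Your proof is correct and matches the paper's argument, which simply declares the claim clear from the definitions: the whole content is that $\max(\cdot-1,0)$ and $\min(\cdot,1)$ are nondecreasing, exactly your one-line verification. The extra duality step via $\Phi(T_L)=T_L'$ and Lemma~\ref{lem:propAB_dual} is valid but not needed, since each half is immediate on its own.
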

\begin{proof}
Clear from the definition of Eqns (\ref{eqn:propA}-\ref{eqn:propA'}).
\end{proof}

\begin{lemma}
Among the $T$-norms and $T$-conorms in Table \ref{tbl:t_norm} and Table \ref{tbl:t_conorm}, Table \ref{tbl:t_norm} indicates which $T$-norms satisfy properties $A$ and $B$ and Table \ref{tbl:t_conorm} indicates which $T$-conorms satisfy properties $A'$ and $B'$.
\label{lem:norms}
\end{lemma}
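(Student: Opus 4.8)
The plan is to reduce the $T$-conorm claims to the $T$-norm claims and then dispose of each $T$-norm in Table~\ref{tbl:t_norm} by one of three routes: a convex additive generator, an ordinal-sum decomposition, or an explicit small counterexample. Since each $T$-conorm in Table~\ref{tbl:t_conorm} equals $\Phi$ applied to the corresponding $T$-norm in Table~\ref{tbl:t_norm}, Lemma~\ref{lem:propAB_dual} shows that a $T$-norm $f$ has property $A$ (resp.\ $B$) exactly when $\Phi(f)$ has property $A'$ (resp.\ $B'$). Hence every assertion about Table~\ref{tbl:t_conorm}, positive or negative, follows from the corresponding assertion about Table~\ref{tbl:t_norm}, and it suffices to analyze the $T$-norms.

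For the checkmarked entries I would argue as follows. The minimum $T_m$ is a copula, so it has property $B$, and $\min(x,w)=x\le y=\min(y,z)$ whenever $x\le y$, so it has property $A$. For $T_p$, $T_L$, $T_C^\alpha$, $T_F^\alpha$, $T_Y^\alpha$, $T_{SW}^\alpha$, $T_G^\alpha$, $T_J^\alpha$ and $T_{AMH}^\alpha$, I would exhibit in each case a strictly decreasing, twice-differentiable, convex additive generator $\mu$ with $\mu(1)=0$ — namely $-\ln x$ for $T_p$; $1-x$ for $T_L$; $\tfrac1\alpha(1-x^{\alpha})$ for $T_C^\alpha$; $(1-x)^{\alpha}$ for $T_Y^\alpha$; $(-\ln x)^{\alpha}$ for $T_G^\alpha$; $-\ln\bigl(1-(1-x)^{\alpha}\bigr)$ for $T_J^\alpha$; $-\ln\frac{\alpha^{x}-1}{\alpha-1}$ for $T_F^\alpha$; $1-\log_{1+\alpha}(1+\alpha x)$ for $T_{SW}^\alpha$; and $\ln\frac{1-(1-\alpha)(1-x)}{x}$ for $T_{AMH}^\alpha$ — verify that $\mu^{(-1)}(\mu(x)+\mu(y))$ reproduces the tabulated formula, and check that $\mu''\ge 0$ over the stated parameter range. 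Theorem~\ref{thm:arch_copula} then exhibits $f$ as an Archimedean copula, so Corollary~\ref{cor:arch_copula} yields both $A$ and $B$. Finally, using $\frac{xy}{\max(x,y)}=\min(x,y)$ and $\alpha\max(\tfrac x\alpha+\tfrac y\alpha-1,0)=\max(x+y-\alpha,0)$, one identifies $T_{DP}^\alpha$ with the ordinal sum $\Asterisk(T_p,0,\alpha)$ and $T_{MT}^\alpha$ with $\Asterisk(T_L,0,\alpha)$; since $T_p$ and $T_L$ have properties $A$ and $B$, Lemma~\ref{lem:ordinal_sum} transfers both to $T_{DP}^\alpha$ and $T_{MT}^\alpha$.

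For the unchecked entries I would give explicit witnesses. For the drastic $T$-norm $T_d$, the tuple $x=0.1,\,y=z=0.6,\,w=1$ satisfies $w+x\le y+z$ yet $T_d(x,w)=0.1>0=T_d(y,z)$, refuting $A$; and $x=z=0.6,\,y=w=1$ gives $T_d(x,w)-T_d(x,z)=0.6>0.4=T_d(y,w)-T_d(y,z)$, refuting $B$. For the nilpotent minimum $T_n$, property $A$ holds by a short case split ($T_n(x,w)=0$ when $x+w\le 1$; otherwise $y+z\ge x+w>1$ forces $T_n(y,z)=y\ge x=T_n(x,w)$), whereas $x=0.3,\,y=0.4,\,z=0.7,\,w=0.8$ gives $T_n(x,w)-T_n(x,z)=0.3>0=T_n(y,w)-T_n(y,z)$, refuting $B$. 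By the reduction of the first paragraph these also settle the entries for $T_d'$ and $T_n'$.

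The step I expect to be the main obstacle is the convexity bookkeeping for the parametric generators across their full stated ranges — in particular verifying $\mu''\ge 0$ for $T_{AMH}^\alpha$ for every $\alpha\in[0,2]$ and for $T_C^\alpha$ for negative $\alpha$, and separately handling the degenerate endpoints $\alpha=0$ for $T_{SW}^\alpha$ and $T_{AMH}^\alpha$ (where $T_{SW}^0=T_L$, and $T_{AMH}^0(x,y)=\frac{xy}{x+y-xy}$ is generated by the suitably rescaled limit $\frac{1-x}{x}$) — together with the routine but lengthy task of checking, family by family, that the listed formula really is the $T$-norm generated by the claimed $\mu$ and is indeed a $T$-norm on the whole range. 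None of this is conceptually deep, but there is a lot of it.
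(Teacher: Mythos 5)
Your proposal is correct and follows essentially the same route as the paper: reduce the conorm claims to the norm claims via Lemma \ref{lem:propAB_dual}, handle $T_{DP}^\alpha$ and $T_{MT}^\alpha$ as ordinal sums via Lemma \ref{lem:ordinal_sum}, and obtain $A$ and $B$ for the parametric families through the Archimedean-copula route of Corollary \ref{cor:arch_copula}. The only difference is one of self-containedness: where the paper cites the literature for the copula property and invokes continuity to rule out property $B$ for $T_n$ (and leaves $T_d$ unaddressed), you verify explicit convex additive generators and give concrete counterexamples, which is a legitimate (if lengthier) substitute.
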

\begin{proof}

The $T$-norms $\min$, product and \L{}ukasiewicz satisfy property $B$ since they are copulas.
It is clear than $\min$ satisfies property $A$.

For the product $T$-norm $f(x,y) = xy$, let $\beta = w-z\geq 0$ and $\alpha = y-x\geq 0$.
If $w+x\leq y+z$, then $\beta \leq \alpha$ and $yz-xw = yz - (y-\alpha)(z+\beta) = \alpha z-\beta y + \alpha \beta \geq 0$, i.e. $f$ satisfies property $A$.

For the  \L{}ukasiewicz $T$-norm  $f(x,y) = \max(x+y-1,0)$, $w+x\leq y+z$ implies $f(w+x) \leq f(y+z)$ and $f$ satisfies property $A$.

For the nilpotent minimum, if $w+x > 1$, then $y+z\geq w+x > 1$ and $f(x,w) = \min(x,w) = x \leq y = \min(y,z) = f(y,z)$.
If $w+x \leq 1$, then $f(x,w) = 0\leq f(y,z)$. Thus property $A$ is satisfied. Note that since it not continuous, it does not satisfy property $B$.

The Dubois-Prade $T$-norms satisfy properties $A$ and $B$ by Lemma \ref{lem:ordinal_sum} since they are the ordinal sum of product and $\min$ $T$-norms.
The Mayor-Torrens $T$-norms satisfy properties $A$ and $B$ since they are the ordinal sum of \L{}ukasiewicz and $\min$ $T$-norms.

The Sugeno-Weber (for $\alpha \geq 0$), Clayton, Yager, Frank,  Ali-Mikhail-Haq, Gumbel and Joe $T$-norms are Archimedean copulas \cite{nelsen:copulas:2006,Kauers:Sugeno-Weber:2010,Klement2014} and thus satisfy properties $A$ and $B$ by Corollary \ref{cor:arch_copula}.

Since the $T$-conorm are duals to the $T$-norms, the conditions on the $T$-conorms are satisfied by Lemma \ref{lem:propAB_dual}.
\end{proof}

Our main results in this section show that the rearrangement inequality and its dual hold for many of the well-known $T$-norms and $T$-conorms listed in Tables \ref{tbl:t_norm}-\ref{tbl:t_conorm}. In particular,
the next results show that the (dual) rearrangement inequality holds for $T$-norms satisfying properties $A$ and $B$ and $T$-conorms satisfying $A'$ and $B'$.

\begin{theorem}
If $\otimes$ satisfies property  $B$ and $\oplus$ satisfies property $A'$, then the rearrangement inequality holds for $(\otimes, \oplus)$.
\label{thm:thm1}
\end{theorem}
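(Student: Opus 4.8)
The plan is to reduce to the two-variable condition and then feed the hypotheses to it in the right order. By the theorem characterizing the rearrangement inequality through inequality~(\ref{eqn:condition}), it suffices to show that for all $0\le x_1\le x_2\le 1$ and $0\le y_1\le y_2\le 1$ one has
\[
(x_1\otimes y_1)\oplus (x_2\otimes y_2)\ \ge\ (x_1\otimes y_2)\oplus (x_2\otimes y_1).
\]

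So fix such $x_1\le x_2$ and $y_1\le y_2$, and abbreviate $a=x_1\otimes y_1$, $b=x_1\otimes y_2$, $c=x_2\otimes y_1$, $d=x_2\otimes y_2$, all of which lie in $[0,1]$. Monotonicity of $\otimes$ gives $a\le b\le d$ and $a\le c\le d$, so that $a=\min(a,b,c,d)$, $d=\max(a,b,c,d)$, and in particular $a\le\min(b,c)\le\max(b,c)\le d$. Property $B$ for $\otimes$, applied with the pairs $x_1\le x_2$ and $y_1\le y_2$, reads $b-a\le d-c$, i.e.\ $b+c\le a+d$; since $\min(b,c)+\max(b,c)=b+c$, this is the same as $d+a\ge \min(b,c)+\max(b,c)$.

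Now apply property $A'$ of $\oplus$ (Definition~\ref{def:propA}) to the chain $a\le\min(b,c)\le\max(b,c)\le d$ with the additive hypothesis $d+a\ge\min(b,c)+\max(b,c)$ just established: it yields $a\oplus d\ge \min(b,c)\oplus\max(b,c)$, and commutativity of $\oplus$ turns the right-hand side into $b\oplus c$. This is exactly inequality~(\ref{eqn:condition}), completing the proof.

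The only thing needing care is the bookkeeping that sets up property $A'$: one must verify the ordering $a\le\min(b,c)\le\max(b,c)\le d$ (immediate from monotonicity of $\otimes$) and observe that the consequence $b+c\le a+d$ of the $2$-increasing property $B$ is precisely the additive premise that property $A'$ consumes. There is no deeper obstacle; conceptually, property $B$ is what converts the monotone ordering of the four products into the additive comparison, and property $A'$ is what converts that additive comparison back into the desired $\oplus$-inequality.
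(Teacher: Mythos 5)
Your proof is correct and follows essentially the same route as the paper: reduce to the two-element condition~(\ref{eqn:condition}), use property $B$ to get $(x_1\otimes y_2)+(x_2\otimes y_1)\le (x_1\otimes y_1)+(x_2\otimes y_2)$, and then invoke property $A'$ of $\oplus$ on the ordered quadruple of products. The only cosmetic difference is that you order the middle terms via $\min$/$\max$ and commutativity where the paper uses a without-loss-of-generality assumption.
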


\begin{proof} It suffices to show Eq. (\ref{eqn:condition}) is satisfied. Let $x_1\leq x_2$ and $y_1\leq y_2$. Without loss of generality, assume $x_1\otimes y_2 \leq x_2\otimes y_1$. Let $\alpha = x_1\otimes y_1$, $\beta = x_2\otimes y_1$, $\mu = x_1\otimes y_2 - x_1\otimes y_1 \geq 0$ and $\gamma =  x_2\otimes y_2 - x_2\otimes y_1\geq 0$. Since $\otimes$ satisfies property $B$, this implies that $\mu \leq \gamma$. Let $x=\alpha$, $y = \alpha+\mu$, $z = \beta$, $w=\beta+\gamma$.
Then $x+w\geq y+z$ and Eq. (\ref{eqn:condition}) can be rewritten as $x\oplus w \geq y\oplus z$ which is true as $\oplus$ satisfies property $A'$.
\end{proof}

\begin{theorem}
If $\otimes$ satisfies property  $A$ and $\oplus$ satisfies property $B'$, then the dual rearrangement inequality holds for $(\otimes, \oplus)$.
\label{thm:thm1_dual}
\end{theorem}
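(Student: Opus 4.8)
The plan is to dualize the proof of Theorem~\ref{thm:thm1} essentially verbatim, interchanging the roles of the two properties and reversing the single inequality. As in that proof, the first theorem of this section reduces the claim to two-element sequences, so it suffices to show that for all $0\le x_1\le x_2\le 1$ and $0\le y_1\le y_2\le 1$,
\[ (x_1\oplus y_1)\otimes (x_2\oplus y_2) \le (x_1\oplus y_2)\otimes (x_2\oplus y_1), \]
which is Eq.~(\ref{eqn:condition_dual}).

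First I would set $a=x_1\oplus y_1$, $b=x_1\oplus y_2$, $c=x_2\oplus y_1$, $d=x_2\oplus y_2$, all of which lie in $[0,1]$. Monotonicity of $\oplus$ gives $a\le b\le d$ and $a\le c\le d$. Applying property $B'$ of $\oplus$ with $x_1\le x_2$ in one slot and $y_1\le y_2$ in the other yields $b-a\ge d-c$, hence $a+d\le b+c$. The inequality to be proved is now exactly $a\otimes d\le b\otimes c$.

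Next, by commutativity of $\otimes$ I may assume without loss of generality that $b\le c$, so that $a\le b\le c\le d$ while also $d+a\le b+c$. This is precisely the hypothesis of property $A$ for $\otimes$ with $(x,y,z,w)=(a,b,c,d)$, and property $A$ then gives $\otimes(a,d)\le\otimes(b,c)$, as wanted; the case $c\le b$ is handled identically after swapping $b$ and $c$.

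I do not expect a real obstacle here: the argument is a mirror image of the proof of Theorem~\ref{thm:thm1}, with property $B$ of $\otimes$ replaced by property $B'$ of $\oplus$ and property $A'$ of $\oplus$ replaced by property $A$ of $\otimes$. The one point that needs genuine care is the direction bookkeeping — verifying that property $B'$ produces $a+d\le b+c$ (and not the reverse) and that this, together with the chain $a\le b\le c\le d$ obtained from monotonicity of $\oplus$ and the relabelling, matches the antecedent ``$w+x\le y+z$'' in Definition~\ref{def:propA} rather than its negation. Alternatively, the statement could be derived from Theorem~\ref{thm:thm1} via Theorem~\ref{thm:negation_RI} with the standard negation together with Lemma~\ref{lem:propAB_dual}, but the direct verification above is shorter and avoids any appeal to negations.
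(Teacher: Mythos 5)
Your proof is correct and follows essentially the same route as the paper's: the paper's quantities $\alpha$, $\alpha+\mu$, $\beta$, $\beta+\gamma$ are exactly your $a$, $b$, $c$, $d$, its appeal to property $B'$ yields the same inequality $a+d\le b+c$, and its ``without loss of generality $x_1\oplus y_2\le x_2\oplus y_1$'' is your assumption $b\le c$ before invoking property $A$ of $\otimes$. No gap; your extra care about the ordering chain and the direction of property $B'$ matches what the paper leaves implicit.
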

The proof is analogous to Theorem \ref{thm:thm1}. In order to show Eq. (\ref{eqn:condition_dual}), we assume $x_1\oplus y_2 \leq x_2\oplus y_1$ without loss of generality. Let $\alpha = x_1\oplus y_1$, $\beta = x_2\oplus y_1$, $\mu = x_1\oplus y_2 - x_1\oplus y_1 \geq 0$ and $\gamma =  x_2\oplus y_2 - x_2\oplus y_1\geq 0$. Since $\oplus$ satisfies property $B'$, this implies that $\mu \geq \gamma$. Let $x=\alpha$, $y = \alpha+\mu$, $z = \beta$, $w=\beta+\gamma$.
Then $x+w\leq y+z$ and Eq. (\ref{eqn:condition_dual}) can be rewritten as $x\otimes w \leq y\otimes z$ which is true as $\otimes$ satisfies property $A$.

\begin{corollary}
If $\otimes$ satisfies properties $A$ and $B$, then the rearrangement inequality and the dual rearrangement inequality hold for $(\otimes, \Phi (\otimes))$.
If $\oplus $ satisfies properties $A'$ and $B'$, then the rearrangement inequality and the dual rearrangement inequality hold for $(\Phi(\oplus), \oplus)$.
\end{corollary}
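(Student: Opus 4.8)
The plan is to derive this corollary as a purely formal consequence of Theorems~\ref{thm:thm1} and~\ref{thm:thm1_dual} together with the duality Lemma~\ref{lem:propAB_dual}; no new inequality has to be established. As a preliminary remark I would note that whenever $\otimes$ is a uninorm so is $\Phi(\otimes)$ (if $\otimes$ has identity $e$ then $\Phi(\otimes)$ has identity $1-e$; in particular $\Phi$ sends $T$-norms to $T$-conorms, as already recorded above), so the standing hypothesis ``uninorm'' in Theorems~\ref{thm:thm1}--\ref{thm:thm1_dual} is automatically met by the pairs $(\otimes,\Phi(\otimes))$ and $(\Phi(\oplus),\oplus)$.

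For the first assertion, assume $\otimes$ satisfies properties $A$ and $B$. By Lemma~\ref{lem:propAB_dual}, $\Phi(\otimes)$ then satisfies properties $A'$ and $B'$. Applying Theorem~\ref{thm:thm1} with the $\oplus$-slot filled by $\Phi(\otimes)$ (here $\otimes$ has property $B$ and $\Phi(\otimes)$ has property $A'$) gives the rearrangement inequality for $(\otimes,\Phi(\otimes))$, and applying Theorem~\ref{thm:thm1_dual} to the same pair ($\otimes$ has property $A$, $\Phi(\otimes)$ has property $B'$) gives the dual rearrangement inequality for $(\otimes,\Phi(\otimes))$.

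For the second assertion, assume $\oplus$ satisfies properties $A'$ and $B'$. Using that $\Phi$ is involutive ($\Phi(\Phi(f))=f$ for the standard, hence strong, negation), Lemma~\ref{lem:propAB_dual} applied to $f=\Phi(\oplus)$ shows $\Phi(\oplus)$ satisfies properties $A$ and $B$. Then Theorem~\ref{thm:thm1} with $\otimes$-slot $\Phi(\oplus)$ and $\oplus$-slot $\oplus$ ($\Phi(\oplus)$ has property $B$, $\oplus$ has property $A'$) yields the rearrangement inequality for $(\Phi(\oplus),\oplus)$, and Theorem~\ref{thm:thm1_dual} with the same assignment ($\Phi(\oplus)$ has property $A$, $\oplus$ has property $B'$) yields the dual rearrangement inequality for $(\Phi(\oplus),\oplus)$. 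Equivalently, the second assertion follows from the first by taking $\otimes=\Phi(\oplus)$ and invoking $\Phi(\Phi(\oplus))=\oplus$.

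Since every step merely invokes an already-proved statement, there is essentially no analytic obstacle; the only point requiring care is the bookkeeping — matching which argument of each pair must satisfy which of $A,A',B,B'$, and keeping in mind that Lemma~\ref{lem:propAB_dual} interchanges the unprimed and primed properties while $\Phi$ is its own inverse. The proof could alternatively be routed through Theorem~\ref{thm:negation_RI}, but the argument above is the most direct.
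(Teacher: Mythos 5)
Your proposal is correct and follows exactly the route the paper intends (the corollary is stated without proof as an immediate consequence of Theorems~\ref{thm:thm1} and~\ref{thm:thm1_dual} combined with Lemma~\ref{lem:propAB_dual}): the slot-matching of properties $A$, $B$, $A'$, $B'$ and the use of the involutivity of $\Phi$ for the second assertion are all handled correctly. The added remark that $\Phi(\otimes)$ is again a uninorm is a sensible bit of bookkeeping, and nothing further is needed.
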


Since copulas satisfy property $B$ by definition, we have
\begin{theorem}
If the $T$-norm $\otimes$ is a copula  and $\oplus$ satisfies property $A'$ then  $(\otimes, \oplus)$ 
satisfies the rearrangement equality.
If  $\otimes$ satisfies  property $A$ and the $T$-norm $\oplus$ is the dual of a copula, then $(\otimes, \oplus)$ 
satisfies the dual rearrangement inequality. 
\label{thm:main_copulas}
\end{theorem}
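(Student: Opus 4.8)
The plan is to read off both halves of the statement from Theorems~\ref{thm:thm1} and \ref{thm:thm1_dual}, which already reduce the (dual) rearrangement inequality for a pair of uninorms to a combination of the property-$A$/$A'$ and property-$B$/$B'$ conditions on the two operations. Thus essentially no new argument is needed; the only remaining work is to convert the copula hypotheses into the property-$B$-type hypotheses those theorems require.

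For the first assertion, I would simply recall that in the definition of a copula given in the excerpt, a copula is by fiat $2$-increasing, i.e.\ it satisfies property $B$. Hence the $T$-norm $\otimes$, being a copula, satisfies property $B$, while the hypothesis already supplies property $A'$ for $\oplus$. Applying Theorem~\ref{thm:thm1} to $(\otimes,\oplus)$ then yields the rearrangement inequality, with nothing further to check.

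For the second assertion, I would write $\oplus = \Phi(g)$ for a copula $g$, taking ``dual'' in the standard-negation sense matching the operator $\Phi$ of Lemma~\ref{lem:propAB_dual} (so $\oplus$ is in fact the co-copula $1-g(1-x,1-y)$, a $T$-conorm). Since $g$ is a copula it satisfies property $B$, so Lemma~\ref{lem:propAB_dual} gives that $\oplus=\Phi(g)$ satisfies property $B'$. Together with the hypothesis that $\otimes$ satisfies property $A$, Theorem~\ref{thm:thm1_dual} applied to $(\otimes,\oplus)$ delivers the dual rearrangement inequality.

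There is no substantive obstacle here: the theorem is a repackaging of the two preceding theorems once one remembers that ``copula'' is shorthand for ``monotone, neutral-at-$1$, property $B$'' and that $\Phi$ interchanges property $B$ with property $B'$. The single point that deserves a moment's care is ensuring the notion of ``dual'' in the hypothesis is exactly the standard-negation dual $\Phi$ used in Lemma~\ref{lem:propAB_dual}; if one wished to allow a general strong negation $\neg$, one would either reprove Lemma~\ref{lem:propAB_dual} for $\Phi^{\neg}$ or route the argument through Theorem~\ref{thm:negation_RI}, but for the statement as given $\Phi$ suffices.
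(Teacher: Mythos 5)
Your proposal is correct and takes essentially the same route as the paper, which presents this theorem as an immediate consequence of Theorems~\ref{thm:thm1} and \ref{thm:thm1_dual} after observing that copulas satisfy property $B$ by definition, with the standard-negation duality (Lemma~\ref{lem:propAB_dual}) converting the dual-of-a-copula hypothesis into property $B'$. Nothing further is needed.
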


The following follows from Corollary \ref{cor:arch_copula} and Theorems \ref{thm:thm1}-\ref{thm:thm1_dual}.
\begin{theorem} If $\otimes$ and $\Phi(\oplus)$ are both Archimedean copulas, then $(\otimes,\oplus)$ satisfies the rearrangement inequality and the dual rearrangement inequality. 
\label{thm:arch_rearrange}
\end{theorem}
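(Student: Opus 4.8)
The plan is simply to assemble the machinery already developed. First I would apply Corollary~\ref{cor:arch_copula} twice: since $\otimes$ is an Archimedean copula it satisfies both property $A$ and property $B$, and since $\Phi(\oplus)$ is an Archimedean copula it too satisfies property $A$ and property $B$. Note that an Archimedean copula is in particular a $T$-norm, so $\Phi(\oplus)$ is a $T$-norm, and hence $\oplus$ is a $T$-conorm; in particular both $\otimes$ and $\oplus$ are uninorms, so the hypotheses of Theorems~\ref{thm:thm1}--\ref{thm:thm1_dual} will be available.

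Next I would transfer the primed properties to $\oplus$ by duality. Because the standard negation is strong, $\Phi$ is an involution, so $\oplus = \Phi(\Phi(\oplus))$. Applying Lemma~\ref{lem:propAB_dual} to the function $f = \Phi(\oplus)$: $\Phi(\oplus)$ satisfying property $A$ is equivalent to $\Phi(\Phi(\oplus)) = \oplus$ satisfying property $A'$, and likewise $\Phi(\oplus)$ satisfying property $B$ is equivalent to $\oplus$ satisfying property $B'$. Hence $\oplus$ satisfies both $A'$ and $B'$.

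Finally I would combine these. With $\otimes$ satisfying property $B$ and $\oplus$ satisfying property $A'$, Theorem~\ref{thm:thm1} yields the rearrangement inequality for $(\otimes,\oplus)$; with $\otimes$ satisfying property $A$ and $\oplus$ satisfying property $B'$, Theorem~\ref{thm:thm1_dual} yields the dual rearrangement inequality for $(\otimes,\oplus)$. This completes the argument.

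\textbf{Main obstacle.} There is essentially no analytic difficulty remaining: all the real work has already been spent, on the one hand in Corollary~\ref{cor:arch_copula} (the convexity estimate on the additive generator feeding into Theorem~\ref{thm:gen_condition}), and on the other hand in Theorems~\ref{thm:thm1}--\ref{thm:thm1_dual}. The only point needing a moment's care is the bookkeeping around $\Phi$: one must apply Corollary~\ref{cor:arch_copula} and Lemma~\ref{lem:propAB_dual} to $\Phi(\oplus)$ rather than to $\oplus$, and record the identity $\Phi(\Phi(\oplus)) = \oplus$ so that Lemma~\ref{lem:propAB_dual} delivers properties $A'$ and $B'$ for $\oplus$ itself; one should also observe that "$\Phi(\oplus)$ is a $T$-norm" is exactly what guarantees $\oplus$ is a genuine $T$-conorm, so that the uninorm hypothesis of Theorems~\ref{thm:thm1}--\ref{thm:thm1_dual} is met.
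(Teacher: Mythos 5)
Your proposal is correct and follows essentially the same route as the paper, which derives the theorem directly from Corollary~\ref{cor:arch_copula} together with Theorems~\ref{thm:thm1}--\ref{thm:thm1_dual}; your only addition is to spell out the duality bookkeeping via Lemma~\ref{lem:propAB_dual} and the involutivity of $\Phi$, which the paper leaves implicit.
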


The approach of decomposing Eqns. (\ref{eqn:condition}-\ref{eqn:condition_dual}) into property $A$ and $B$ does not always work as there are $T$-norms that are not copulas (e.g. drastic minimum which is not continuous) and thus does not satisfy condition $B$. The next result shows that even in that case Eqns. (\ref{eqn:condition}-\ref{eqn:condition_dual}) hold for some $T$-norms and their dual. 

\begin{theorem}
If $\otimes$ is minimum  and $\oplus$ is a uninorm, then 
$(\otimes,\oplus)$ satisfies the rearrangement inequality and the dual rearrangement inequality.
If $\oplus$ is maximum and $\otimes$ is a uninorm,
then $(\otimes,\oplus)$ satisfies the rearrangement inequality and the dual rearrangement inequality.
\label{thm:main_min}
\end{theorem}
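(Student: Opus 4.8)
The plan is to reduce everything, via the theorem giving the necessary and sufficient conditions (Eqns.~(\ref{eqn:condition}-\ref{eqn:condition_dual})), to checking a single two-by-two inequality in each of the four assertions. So fix $0\le x_1\le x_2\le 1$ and $0\le y_1\le y_2\le 1$; it suffices to verify (\ref{eqn:condition}) and (\ref{eqn:condition_dual}) with $\otimes=\min$, and (\ref{eqn:condition}) and (\ref{eqn:condition_dual}) with $\oplus=\max$. I would remark up front that $\min$ and $\max$ are themselves uninorms, so the statements are internally consistent, and that only two of the four cases need a direct argument: the standard negation is strong, $\Phi(\min)=\max$, and $\Phi$ sends a uninorm with identity $e$ to a uninorm with identity $\neg(e)$ (commutativity, associativity and monotonicity are visibly preserved), so $\Phi$ is a bijection on uninorms. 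Theorem~\ref{thm:negation_RI} then converts ``$(\min,\oplus)$ satisfies the rearrangement inequality for every uninorm $\oplus$'' into ``$(\otimes,\max)$ satisfies the dual rearrangement inequality for every uninorm $\otimes$,'' and likewise pairs up the other two cases; hence it is enough to treat $\otimes=\min$.

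The two easy cases follow from monotonicity of the uninorm alone. For $\otimes=\min$ the dual condition (\ref{eqn:condition_dual}) is $\min(x_1\oplus y_1,\,x_2\oplus y_2)\le\min(x_1\oplus y_2,\,x_2\oplus y_1)$, and since $\oplus$ is monotone we have $x_1\oplus y_1\le x_1\oplus y_2$ and $x_1\oplus y_1\le x_2\oplus y_1$, so $x_1\oplus y_1$ is a lower bound for the right-hand minimum while also being one of the two terms in the left-hand minimum. The case $\oplus=\max$ with the plain rearrangement inequality is the mirror image: by monotonicity of $\otimes$, $x_2\otimes y_2\ge x_1\otimes y_2$ and $x_2\otimes y_2\ge x_2\otimes y_1$, so $x_2\otimes y_2$ dominates the right side of (\ref{eqn:condition}) and appears on the left.

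For the remaining case, $\otimes=\min$ with the plain rearrangement inequality, the key observation is that $\min(x_1,y_1)$ coincides with one of the cross-terms: if $x_1\le y_1$ then $\min(x_1,y_1)=x_1=\min(x_1,y_2)$ (as $x_1\le y_1\le y_2$), and if $y_1\le x_1$ then $\min(x_1,y_1)=y_1=\min(x_2,y_1)$ (as $y_1\le x_1\le x_2$); in either subcase each cross-term is $\le x_2$ and $\le y_2$, hence $\le\min(x_2,y_2)$. Thus $\{\min(x_1,y_1),\min(x_2,y_2)\}$ is obtained from $\{\min(x_1,y_2),\min(x_2,y_1)\}$ by fixing one coordinate and raising the other, and monotonicity of $\oplus$ gives (\ref{eqn:condition}); the dual case $\oplus=\max$ is the order-reversed analogue anchored at $\max(x_2,y_2)$. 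There is no genuinely hard step here — the only care needed is the bookkeeping in that combinatorial observation (verifying that the non-anchored cross-term really is sandwiched between $\min(x_1,y_1)$ and $\min(x_2,y_2)$) and confirming, as above, that $\Phi$ takes a general uninorm to a uninorm so the negation shortcut is legitimate.
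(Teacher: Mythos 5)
Your proposal is correct and follows essentially the same route as the paper: reduce to the two-term conditions (\ref{eqn:condition})--(\ref{eqn:condition_dual}), split on which of $x_1,y_1$ (resp.\ $x_2,y_2$) achieves the $\min$ (resp.\ $\max$), and finish by monotonicity and commutativity of the other uninorm, exactly as in the paper's case analysis. The extra observation that $\Phi(\min)=\max$ and Theorem~\ref{thm:negation_RI} let you transfer the $\otimes=\min$ cases to the $\oplus=\max$ cases is a valid (and slightly economical) shortcut the paper does not use, but it does not change the substance of the argument.
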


\begin{proof}
Let $x_1\leq x_2$, $y_1\leq y_2$. 
Let $\otimes = \min$. Without loss of generality, assume $x_1\leq y_1$.
Then Eq. (\ref{eqn:condition}) can be written as 
$x_1\oplus \min(x_2,y_2) \geq x_1\oplus \min(x_2,y_1)$ which is true since $y_1\leq y_2$ and $\oplus$ is monotone.
Similarly, Eq. (\ref{eqn:condition_dual}) can be written as 
$\min(x_1\oplus y_1, x_2\oplus y_2) \leq \min(x_1\oplus y_2, x_2\oplus y_1)$
which is true since  $x_1\oplus y_1 \leq x_1\oplus y_2$ and  $x_1\oplus y_1 \leq x_2\oplus y_1$ by monotonicity of $\oplus$.

Suppose $\oplus = \max$.
Then Eq. (\ref{eqn:condition}) can be written as 
$\max(x_1\otimes y_1, x_2\otimes y_2) \geq \max(x_1\otimes y_2, x_2\otimes y_1)$ which is true as $x_2\otimes y_2$ is the largest of the $4$ terms.
Similarly, Eq. (\ref{eqn:condition_dual}) can be written as 
$y_1\otimes \max(x_2,y_2) \leq y_2 \otimes \max(x_2,y_1)$. 
If $x_2\leq y_2$, this is reduced to $y_1\otimes y_2 \leq y_2 \otimes \max(x_2,y_1)$ which is true as $\otimes$ is commutative and $\max(x_2,y_1)\geq y_1$.
If $x_2 \geq y_2$, this is reduced to $y_1\otimes x_2\leq y_2 \otimes x_2$ which again is true.
\end{proof}

\begin{theorem}
If $\otimes$ is drastic minimum and $\oplus$ is a disjunctive uninorm, then 
$(\otimes,\oplus)$ satisfies the rearrangement inequality.
If $\oplus$ is drastic maximum and $\otimes$ is a conjunctive uninorm,
then $(\otimes,\oplus)$ satisfies the dual rearrangement inequality.
\label{thm:main_dmin}
\end{theorem}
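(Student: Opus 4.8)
The plan is to establish the first assertion by a direct finite case analysis, and then to obtain the second from it via the negation duality of Theorem~\ref{thm:negation_RI}. By the characterization of the rearrangement inequality in terms of Eq.~(\ref{eqn:condition}), it suffices, for the first assertion, to show that $(x_1\otimes y_1)\oplus(x_2\otimes y_2)\ \geq\ (x_1\otimes y_2)\oplus(x_2\otimes y_1)$ for all $0\le x_1\le x_2\le 1$ and $0\le y_1\le y_2\le 1$, where $\otimes=T_d$ and $\oplus$ is an arbitrary disjunctive uninorm.

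The key point is that $T_d(x,y)=0$ unless $x=1$ or $y=1$, in which case it equals the other argument; combined with the orderings (so $x_2<1\Rightarrow x_1<1$, and similarly in $y$) this reduces the problem to a handful of cases. If $x_1=1$ then $x_2=1$ and both sides collapse to $y_1\oplus y_2$; symmetrically if $y_1=1$. Otherwise $x_1,y_1<1$, so $x_1\otimes y_1=0$, and I split on $x_2,y_2$: if both are $<1$ all four products are $0$ and both sides equal $0\oplus0$; if exactly one of them is $1$, say $x_2=1$ and $y_2<1$, then $x_1\otimes y_2=0$, $x_2\otimes y_1=y_1$, $x_2\otimes y_2=y_2$, and the claim reduces (using commutativity of $\oplus$ if necessary) to $0\oplus y_2\ge 0\oplus y_1$, which holds by monotonicity of $\oplus$; the other subcase is identical. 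The one case that uses the hypothesis is $x_2=y_2=1$: then $x_2\otimes y_2=1$, so the left-hand side is $(x_1\otimes y_1)\oplus 1$, which equals $1$ because $0\oplus1=1$ (disjunctivity), $\oplus$ is monotone, and $\oplus$ is $[0,1]$-valued, while the right-hand side is at most $1$.

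For the second assertion I would apply Theorem~\ref{thm:negation_RI} with $\neg$ the standard negation $x\mapsto 1-x$; write $\Phi=\Phi^{\neg}$. That theorem says: for any uninorms $P,Q$, the pair $(\Phi(Q),\Phi(P))$ satisfies the dual rearrangement inequality if and only if $(P,Q)$ satisfies the rearrangement inequality. Take $P=T_d$ and $Q=\Phi(\otimes)$, where $\otimes$ is a conjunctive uninorm. Then $\Phi(P)=\Phi(T_d)=T_d'$ (drastic minimum and drastic maximum are standard-negation duals, cf.\ Tables~\ref{tbl:t_norm}--\ref{tbl:t_conorm}) and $\Phi(Q)=\otimes$. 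Moreover $\Phi$ sends uninorms to uninorms (commutativity, associativity and monotonicity are preserved, and an identity $e$ becomes $1-e$) and sends conjunctive uninorms to disjunctive ones, since $0\otimes1=0$ gives $\Phi(\otimes)(0,1)=1-\otimes(1,0)=1$; hence $Q=\Phi(\otimes)$ is a disjunctive uninorm. By the first assertion $(T_d,\Phi(\otimes))=(P,Q)$ satisfies the rearrangement inequality, and therefore $(\otimes,T_d')=(\Phi(Q),\Phi(P))$ satisfies the dual rearrangement inequality, as required. (Alternatively, the second assertion admits a direct proof mirroring the case analysis above, now using $T_d'(x,y)=1$ unless $x=0$ or $y=0$ and invoking $0\otimes1=0$ precisely in the case $x_1=y_1=0$.)

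The step I expect to be the only real nuisance is making the case split for the first assertion genuinely exhaustive while being careful to use, in the cases that do not invoke disjunctivity, only commutativity and monotonicity of $\oplus$ — not neutrality of $0$, since a general uninorm need not have $0$ as its identity element. Isolating the single case ($x_2=y_2=1$, respectively $x_1=y_1=0$) where the conjunctive/disjunctive hypothesis is actually used is the conceptual core of the argument.
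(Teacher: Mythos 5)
Your proof is correct. For the first assertion you do exactly what the paper does: reduce to the two-element condition of Eq.~(\ref{eqn:condition}) and run a finite case analysis on which of the arguments equal $1$, invoking disjunctivity ($0\oplus 1=1$) only in the single case $x_2=y_2=1$ and otherwise using only commutativity and monotonicity of $\oplus$; your case split (peeling off $x_1=1$ or $y_1=1$ first) is organized slightly differently from the paper's (which splits on $x_2,y_2$ and then on $x_1$), but it is exhaustive and equivalent. Where you genuinely diverge is the second assertion: the paper proves it by a second, mirrored case analysis on Eq.~(\ref{eqn:condition_dual}) using conjunctivity ($0\otimes 1=0$) in the case $x_1=y_1=0$, whereas you deduce it from the first assertion via Theorem~\ref{thm:negation_RI} with the standard negation, taking $P=T_d$, $Q=\Phi(\otimes)$, and using $\Phi(T_d)=T_d'$, the involutivity $\Phi(\Phi(\otimes))=\otimes$, and the facts that $\Phi$ maps uninorms to uninorms and conjunctive to disjunctive ones. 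All of these auxiliary facts are easy and you verify the nontrivial ones (the paper states the involution and the norm/conorm duality explicitly, though only for $T$-norms and $T$-conorms, so the uninorm-level checks you supply are needed and adequate). The duality route buys you economy --- one case analysis instead of two, in the same spirit as the paper's own use of Theorem~\ref{thm:negation_RI} in Theorems~\ref{thm:special_case}--\ref{thm:special_case2} --- at the cost of those small structural verifications; the paper's direct second case analysis is self-contained but essentially repeats the first argument under negation.
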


\begin{proof}
Let $x_1\leq x_2$, $y_1\leq y_2$. 
Let $\otimes$ be the drastic minimum $T_d$ and $\oplus$ be a disjunctive uninorm. If $x_2< 1$ and $y_2 < 1$, Eq. (\ref{eqn:condition}) becomes trivially $0\oplus 0 \geq 0\oplus 0$.
If $x_2=y_2=1$, 
we have $x_2\otimes y_2 = 1$ and Eq. (\ref{eqn:condition}) becomes $1\oplus (x_1\otimes y_1) \geq (x_1\otimes y_2)\oplus (x_2\otimes y_1)$ which is true since $1\oplus (x_1\otimes y_1)\geq 1\oplus 0 = 1$ as $\oplus$ is disjunctive.
If $x_2 = 1$ and $y_2 < 1$, then Eq. (\ref{eqn:condition}) becomes
$(x_1\otimes y_1) \oplus y_2 \geq (x_1\otimes y_2) \oplus y_1$. If $x_1 < 1$, this is reduced to $0\oplus y_2\geq 0 \oplus y_1$ which is true.
If $x_1 = 1$, this becomes $y_1\oplus y_2 \geq y_2\oplus y_1$ which is also true.
A symmetric argument shows the remaining case $x_2 < 1$ and $y_2 = 1$.

Let $\oplus$ be the drastic maximum $T_d'$ and $\otimes$ be a conjunctive uninorm. 
If $x_1 >  0$ and $y_1 >  0$, Eq. (\ref{eqn:condition_dual}) becomes trivially $1\otimes 1 \leq 1\otimes 1$.
If $x_1=y_1=0$, 
we have $x_1\oplus y_1 = 0$ and Eq. (\ref{eqn:condition_dual}) becomes $0\otimes (x_2\oplus y_2) \leq (x_1\oplus y_2)\otimes (x_2\oplus y_1)$ which is true since $0\otimes (x_2\oplus y_2) \leq 0\otimes 1 = 0$ as $\otimes$ is conjunctive.
If $x_1 = 0$ and $y_1 > 0$, then Eq. (\ref{eqn:condition_dual}) becomes
$y_1 \otimes (x_2\oplus y_2)  \leq y_2 \otimes (x_2\oplus y_1)$. If $x_2 > 0$, this is reduced to $y_1\otimes 1 \leq y_2\otimes 1$ which is true.
If $x_2 = 0$, this becomes $y_1\otimes y_2 \leq y_2\otimes y_1$ which is also true.
A symmetric argument shows the remaining case $x_1 >0$ and $y_1 = 0$.
\end{proof}

\begin{theorem}
If $\oplus$ is drastic maximum and $\otimes$ is a $T$-norm with no zero divisors,
then $(\otimes,\oplus)$ satisfies the rearrangement inequality.
If $\otimes$ is drastic minimum and $\oplus$ is a $T$-conorm such that $\Phi(\oplus)$ has no zero divisors, then 
$(\otimes,\oplus)$ satisfies the dual rearrangement inequality.
\label{thm:main_dmin2}
\end{theorem}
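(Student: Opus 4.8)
\medskip
\noindent\textbf{Proof plan.} The plan is to reduce both claims to the two-variable inequalities Eqs.~(\ref{eqn:condition})--(\ref{eqn:condition_dual}), which by the first theorem of this section are equivalent to the rearrangement inequality and its dual respectively. The only hypothesis that needs unpacking is ``no zero divisors''. Because every $T$-norm satisfies $0\otimes z = 0$, a $T$-norm $\otimes$ has no zero divisors exactly when ``$x\otimes y = 0$'' is equivalent to ``$x=0$ or $y=0$''; dually, because every $T$-conorm satisfies $1\oplus z = 1$ and $\Phi(\oplus)$ is a $T$-norm whenever $\oplus$ is a $T$-conorm, the hypothesis that $\Phi(\oplus)$ has no zero divisors becomes, after substituting $u=1-x$ and $v=1-y$, the statement that ``$x\oplus y = 1$'' is equivalent to ``$x=1$ or $y=1$''. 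I would establish these two reformulations at the outset.

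For the first claim I would take $\oplus = T_d'$, fix $0\leq x_1\leq x_2\leq 1$ and $0\leq y_1\leq y_2\leq 1$, and split on the value of $x_1\otimes y_1$. If $x_1\otimes y_1 > 0$ then $x_1>0$ and $y_1>0$, hence all four products $x_i\otimes y_j$ are positive (using $1\otimes z = z$ together with the no-zero-divisor reformulation), so by the definition of the drastic maximum both sides of Eq.~(\ref{eqn:condition}) equal $1$. If $x_1\otimes y_1 = 0$ then $x_1=0$ or $y_1=0$; in the first case $x_1\otimes y_2 = 0$ as well and Eq.~(\ref{eqn:condition}) collapses to $x_2\otimes y_2 \geq x_2\otimes y_1$, which is monotonicity of $\otimes$, and the second case is symmetric.

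For the second claim the efficient route is Theorem~\ref{thm:negation_RI} applied with the standard negation. Since $\Phi(T_d) = T_d'$ and $\Phi$ is an involution, the first claim applied to the pair $(\Phi(\oplus),T_d')$ --- in which $\Phi(\oplus)$ is a $T$-norm with no zero divisors by hypothesis --- shows that this pair satisfies the rearrangement inequality, and Theorem~\ref{thm:negation_RI} then gives that $(\Phi(T_d'),\Phi(\Phi(\oplus))) = (T_d,\oplus)$ satisfies the dual rearrangement inequality. A self-contained alternative is to run the same bookkeeping directly on Eq.~(\ref{eqn:condition_dual}) with $\otimes = T_d$: split on whether $x_2\oplus y_2 < 1$, in which case neither argument of the left-hand drastic minimum equals $1$ and that side is $0$, or $x_2\oplus y_2 = 1$, in which case $x_2=1$ or $y_2=1$, the drastic minimum collapses to a single coordinate, and the remaining inequality is just monotonicity of $\oplus$. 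I do not expect a genuine obstacle here; the only points requiring care are the two ``no zero divisors'' reformulations and tracking which of the four corner terms degenerates in each branch.
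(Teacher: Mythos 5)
Your proposal is correct and follows essentially the same route as the paper: reduce to the two-variable conditions (\ref{eqn:condition})--(\ref{eqn:condition_dual}), reformulate ``no zero divisors'' as $x\otimes y=0 \Leftrightarrow x=0$ or $y=0$ (and its negated form $x\oplus y=1 \Leftrightarrow x=1$ or $y=1$), and case-split on whether the extreme term degenerates, finishing each branch with monotonicity. The only cosmetic difference is that for the second claim your primary route goes through Theorem~\ref{thm:negation_RI} applied to $(\Phi(\oplus),T_d')$, whereas the paper verifies Eq.~(\ref{eqn:condition_dual}) directly (your ``self-contained alternative''); both are valid, and the duality shortcut is exactly the device the paper itself uses in Theorems~\ref{thm:special_case}--\ref{thm:special_case2}.
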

\begin{proof}
Let $x_1\leq x_2$, $y_1\leq y_2$, $\oplus = T_d'$ and $\otimes$ be a $T$-norm with no zero divisors.
Define $p = x_1\otimes y_1$, $q = x_2\otimes y_2$, $r = x_1\otimes y_2$, $s = x_2\otimes y_1$. Suppose $p = 0$, then as $\otimes$ has no zero divisors, $x_1 = 0$ or $y_1 = 0$.
Without loss of generality, we assume $x_1 = 0$. This implies that $r = 0$ and Eq. (\ref{eqn:condition}) is reduced to
$q \geq s$ which is true by monotonicity of $\oplus$.
If $p > 0$, then $q, r, s > 0$ by monotonicity and Eq. (\ref{eqn:condition}) can be written as $1\geq 1$.

Let $\otimes = T_d$ and $\Phi(\oplus)$ be a $T$-norm with no zero divisors.
Define $p = x_1\oplus y_1$, $q = x_2\oplus y_2$, $r = x_1\oplus y_2$, $s = x_2\oplus y_1$. Suppose $q = 1$, then as $\Phi(\oplus)$ has no zero divisors,  this implies $x_2 = 1$ or $y_2 = 1$.
Without loss of generality, we assume $x_2 = 1$. This implies that $s = 1$ and Eq. (\ref{eqn:condition_dual}) is reduced to
$p \leq r$ which is true by monotonicity of $\otimes$.
If $q < 1$, then $p, r, s < 1$ by monotonicity and Eq. (\ref{eqn:condition_dual}) can be written as $0\leq 0$.
\end{proof}

Examples of $T$-norms without zero divisors include the product $T$-norm, $T_{DP}^\alpha$, and Archimedean copulas such that the additive generator satisfies $\mu(0)=\infty$ (i.e. strict copulas \cite{Klement:positionII:2004}), e.g. $T_C^\alpha$ for $\alpha < 0$, $T_{AMH}^\alpha$, $T_F^\alpha$, $T_G^\alpha$ and $T_J^\alpha$.

\begin{theorem}
Both $(T_n,T_L)$ and $(T_L',T_n')$ satisfy the rearrangement inequality and its dual.
\label{thm:special_case}
\end{theorem}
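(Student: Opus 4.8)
\medskip

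\noindent\textbf{Proof plan.} The plan is to reduce all four assertions to the single pair $(T_n, T_L)$. Taking $\neg$ to be the standard negation $x\mapsto 1-x$, we have $\Phi(T_n)=T_n'$, $\Phi(T_L)=T_L'$, and $\Phi$ is involutive; so Theorem~\ref{thm:negation_RI} gives that $(T_L',T_n')$ satisfies the rearrangement inequality if and only if $(T_n,T_L)$ satisfies the dual rearrangement inequality, and that $(T_L',T_n')$ satisfies the dual rearrangement inequality if and only if $(T_n,T_L)$ satisfies the rearrangement inequality. Hence it suffices to prove that $(T_n,T_L)$ satisfies \emph{both} inequalities; the statements about $(T_L',T_n')$ then follow. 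By the characterization of the (dual) rearrangement inequality in terms of Eqn.~(\ref{eqn:condition}) (resp.~Eqn.~(\ref{eqn:condition_dual})), I would then verify those two inequalities for $\otimes=T_n$, $\oplus=T_L$ and arbitrary $0\le x_1\le x_2\le 1$, $0\le y_1\le y_2\le 1$.

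For Eqn.~(\ref{eqn:condition}) I would case-split on the position of the sums $x_i+y_j$ relative to $1$, using that $x_1+y_1$ is the smallest and $x_2+y_2$ the largest. If $x_2+y_2\le 1$, then every $T_n(x_i,y_j)=0$ and both sides equal $T_L(0,0)=0$. If $x_1+y_1>1$, then all four sums exceed $1$, so $T_n(x_i,y_j)=\min(x_i,y_j)$ throughout, and Eqn.~(\ref{eqn:condition}) becomes exactly Eqn.~(\ref{eqn:condition}) for the pair $(\min,T_L)$, which holds because $\min$ is a copula and $T_L$ has property $A'$ (this is the content of the proof of Theorem~\ref{thm:thm1}). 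In the remaining case $x_1+y_1\le 1<x_2+y_2$ we have $T_n(x_1,y_1)=0$ and $T_n(x_2,y_2)=\min(x_2,y_2)$, so the left-hand side of Eqn.~(\ref{eqn:condition}) equals $T_L(0,\min(x_2,y_2))=\max(\min(x_2,y_2)-1,0)=0$, and it only remains to check that the right-hand side vanishes, i.e.\ $T_n(x_1,y_2)+T_n(x_2,y_1)\le 1$; this is immediate from $T_n(x_1,y_2)\le\min(x_1,y_2)\le x_1$, $T_n(x_2,y_1)\le\min(x_2,y_1)\le y_1$, and $x_1+y_1\le 1$.

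For Eqn.~(\ref{eqn:condition_dual}) I would set $a=T_L(x_1,y_1)$, $b=T_L(x_2,y_2)$, $c=T_L(x_1,y_2)$, $d=T_L(x_2,y_1)$, so that monotonicity of $T_L$ gives $a\le c\le b$ and $a\le d\le b$, and the goal is $T_n(a,b)\le T_n(c,d)$. If $a+b\le 1$ then $T_n(a,b)=0\le T_n(c,d)$. If $a+b>1$, then (since $a\le b\le 1$) $a>0$, hence $x_1+y_1>1$, hence all four sums $x_i+y_j$ exceed $1$ and $a=x_1+y_1-1$, $b=x_2+y_2-1$, $c=x_1+y_2-1$, $d=x_2+y_1-1$; therefore $c+d=x_1+x_2+y_1+y_2-2=a+b>1$, so $T_n(c,d)=\min(c,d)$, while $T_n(a,b)=\min(a,b)=a$, and $a\le c$, $a\le d$ give $T_n(a,b)=a\le\min(c,d)=T_n(c,d)$.

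The point requiring care is that Theorems~\ref{thm:thm1}--\ref{thm:thm1_dual} are not applicable across the board here: $T_n$ is discontinuous and so fails property $B$, and $T_L$ fails property $B'$, so a direct case analysis is unavoidable. Within it, the delicate regime is the ``mixed'' one in which some but not all of the operand-sums $x_i+y_j$ exceed $1$; the two observations that make that regime work are $\min(x_1,y_2)+\min(x_2,y_1)\le x_1+y_1\le 1$ (for Eqn.~(\ref{eqn:condition})) and the fact that $a+b>1$ already forces $x_1+y_1>1$, so that $T_L$ collapses to its affine branch on all four inputs (for Eqn.~(\ref{eqn:condition_dual})).
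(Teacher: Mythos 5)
Your proof is correct and follows essentially the same route as the paper: verify the two-term conditions (\ref{eqn:condition}) and (\ref{eqn:condition_dual}) for $(T_n,T_L)$ by splitting on whether the relevant sums exceed $1$, then transfer to $(T_L',T_n')$ via Theorem~\ref{thm:negation_RI}. The only cosmetic differences are that you invoke Theorem~\ref{thm:thm1} for the all-sums-above-$1$ subcase where the paper argues directly by monotonicity of $\oplus$, and your handling of the boundary cases (sums exactly equal to $1$ or $3$) is if anything slightly more careful than the paper's.
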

\begin{proof}
Let $x_1\leq x_2$ and $y_1\leq y_2$, and $\otimes = T_n$ and $\oplus = T_L$, if
$x_1+y_1 < 1$, then $x_1\otimes y_1 = 0$ and $(x_1\otimes y_1)\oplus(x_2\otimes y_2) = 0$ since $T_L$ is a $T$-norm. On the other hand $(x_1\otimes y_2) \leq x_1$ and $(x_2\otimes y_1) \leq y_1$, and thus $(x_1\otimes y_2) +(x_2\otimes y_1) \leq x_1+y_1 < 1$ and $(x_1\otimes y_2) \oplus(x_2\otimes y_1) = 0$ and  Eq. (\ref{eqn:condition}) is satisfied.
If $x_1+y_1 \geq 1$, then Eq. (\ref{eqn:condition}) is reduced to
$\min(x_1,y_1)\oplus \min(x_2, y_2) \geq \min(x_1,y_2)\oplus \min(x_2,y_1)$. Without loss of generality, assume $x_1\leq y_1$.
Then we have $x_1 \oplus \min(x_2, y_2) \geq x_1 \oplus \min(x_2,y_1)$ which is true by the monotonicity of $\oplus$.

As for Eq. (\ref{eqn:condition_dual}), if $x_1+y_1 < 1$, then $x_1\oplus y_1 = 0$ and $(x_1\oplus y_1)\otimes (x_2\oplus y_2) = 0$. Thus Eq. (\ref{eqn:condition_dual}) is satisfied. 
If $x_1+y_1 \geq 1$, then $(x_1\oplus y_1)\otimes (x_2\oplus y_2) = (x_1+y_1-1)\otimes (x_2+y_2-1)$. If $x_1+y_1+x_2+y_2 < 3$, then this is equal to $0$ and Eq. (\ref{eqn:condition_dual}) is satisfied. If $x_1+y_1+x_2+y_2 \geq 3$, then $(x_1+y_1-1)\otimes (x_2+y_2-1)= \min(x_1+y_1-1,x_2+y_2-1) = \min(x_1+y_1,x_2+y_2)-1$.
Similarly  $(x_1\oplus y_2)\otimes (x_2\oplus y_1) = \min(x_1+y_2,x_2+y_1)-1$ and thus Eq. (\ref{eqn:condition_dual}) is satisfied as $x_1+y_1$ is the smallest term.

The case where $\otimes = T_L'$ and $\oplus = T_n'$ follows from applying Theorem \ref{thm:negation_RI} to the case $\otimes = T_n$ and $\oplus = T_L$.
\end{proof}

\begin{theorem}
$(T_n,T_n')$ satisfies the rearrangement inequality and its dual.
\label{thm:special_case2}
\end{theorem}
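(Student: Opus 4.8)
The plan is to invoke Theorem~1, which reduces both claims to the two-variable inequalities: it suffices to verify Eq.~(\ref{eqn:condition}) and Eq.~(\ref{eqn:condition_dual}) for $\otimes=T_n$, $\oplus=T_n'$ and arbitrary $0\le x_1\le x_2\le 1$, $0\le y_1\le y_2\le 1$. For Eq.~(\ref{eqn:condition}) I would split on $x_1+y_1$. If $x_1+y_1\ge 1$, then monotonicity of the two sequences forces $x_i+y_j\ge 1$ for every pair $(i,j)$, so $x_i\otimes y_j=\min(x_i,y_j)$ throughout and Eq.~(\ref{eqn:condition}) becomes exactly Eq.~(\ref{eqn:condition}) for the pair $(\min,T_n')$, which holds by Theorem~\ref{thm:main_min}; concretely, assuming without loss of generality $x_1\le y_1$, both sides begin with $x_1$ and the claim collapses to $x_1\oplus\min(x_2,y_2)\ge x_1\oplus\min(x_2,y_1)$, true by monotonicity of $\oplus$. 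If instead $x_1+y_1<1$, then $x_1\otimes y_1=0$, so by neutrality of $0$ for $\oplus$ the left-hand side of Eq.~(\ref{eqn:condition}) equals $x_2\otimes y_2$. The key bounds are $x_1\otimes y_2\le x_1\otimes 1=x_1$ and $x_2\otimes y_1\le 1\otimes y_1=y_1$, so $(x_1\otimes y_2)+(x_2\otimes y_1)\le x_1+y_1<1$; hence the right-hand side equals $\max(x_1\otimes y_2,\,x_2\otimes y_1)$, and this is $\le x_2\otimes y_2$ by monotonicity of $\otimes$. This gives Eq.~(\ref{eqn:condition}), hence the rearrangement inequality for $(T_n,T_n')$.

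For the dual I would note that $T_n'=\Phi(T_n)$ for the standard (strong) negation $x\mapsto 1-x$: checking the regimes $x+y<1$ and $x+y\ge 1$ shows that $1-T_n(1-x,1-y)$ equals $\max(x,y)$ and $1$ respectively, which is $T_n'(x,y)$. Then the corollary following Theorem~\ref{thm:negation_RI} immediately promotes the rearrangement inequality for $(T_n,\Phi(T_n))$ to the dual rearrangement inequality. (Should a self-contained argument be preferred, one can instead verify Eq.~(\ref{eqn:condition_dual}) directly by splitting on $x_2+y_2$: when $x_2+y_2<1$ all pairs satisfy $x_i+y_j<1$, so $x_i\oplus y_j=\max(x_i,y_j)$ and Eq.~(\ref{eqn:condition_dual}) reduces to the $(T_n,\max)$ case of Theorem~\ref{thm:main_min}; when $x_2+y_2\ge 1$ we have $x_2\oplus y_2=1$, so the left-hand side is $x_1\oplus y_1$, while setting $R=x_1\oplus y_2$ and $S=x_2\oplus y_1$ gives $R\ge y_2$ and $S\ge x_2$, hence $R+S\ge x_2+y_2\ge 1$, so $R\otimes S=\min(R,S)$, and $\min(R,S)\ge x_1\oplus y_1$ because $R,S\ge x_1\oplus y_1$ by monotonicity of $\oplus$.)

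I do not anticipate a real obstacle; the proof is a short finite case analysis and a natural companion to Theorem~\ref{thm:special_case}. The one point that needs care is the $x_1+y_1<1$ regime for Eq.~(\ref{eqn:condition}): one must recognize that each off-diagonal product $x_i\otimes y_j$ is simultaneously dominated by the corresponding single coordinate---which keeps the relevant sum below $1$, so that $T_n'$ acts as $\max$ there---and by the diagonal product $x_2\otimes y_2$, after which the inequality follows from monotonicity alone. Because $T_n$ fails property $B$ (it is not continuous) and $T_n'$ fails property $B'$, the generic route through Theorem~\ref{thm:thm1} is unavailable, so this hands-on split, together with the negation-duality shortcut of Theorem~\ref{thm:negation_RI} for the dual, is what the argument rests on.
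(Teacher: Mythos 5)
Your case split and the negation--duality step mirror the paper's own proof almost exactly, but the step you treat as routine in the regime $x_1+y_1\ge 1$ is precisely where the argument breaks: from $x_i+y_j\ge 1$ you conclude $x_i\otimes y_j=\min(x_i,y_j)$, yet the nilpotent minimum returns $\min$ only when the sum is \emph{strictly} greater than $1$; at $x_1+y_1=1$ one has $x_1\otimes y_1=0$, while $T_n'$ simultaneously jumps to $1$ on arguments summing to exactly $1$. This is not a repairable slip, because the statement itself fails on that boundary. Take $x_1=0.4$, $x_2=0.7$, $y_1=0.6$, $y_2=0.8$. Then $x_1\otimes y_1=T_n(0.4,0.6)=0$ and $x_2\otimes y_2=0.7$, so the similarly ordered side of Eq.~(\ref{eqn:condition}) is $T_n'(0,0.7)=0.7$, whereas $x_1\otimes y_2=0.4$ and $x_2\otimes y_1=0.6$ give $T_n'(0.4,0.6)=1$ on the other side; Eq.~(\ref{eqn:condition}) would read $0.7\ge 1$, and for $n=2$ the oppositely ordered expression ($=1$) exceeds the similarly ordered one ($=0.7$), so the rearrangement inequality for $(T_n,T_n')$ is violated. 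Your parenthetical direct proof of the dual has the same defect: from $R+S\ge 1$ you infer $R\otimes S=\min(R,S)$, which fails at $R+S=1$; correspondingly $x_1=0.3$, $x_2=0.6$, $y_1=0.2$, $y_2=0.4$ gives $(x_1\oplus y_1)\otimes(x_2\oplus y_2)=T_n(0.3,1)=0.3$ but $(x_1\oplus y_2)\otimes(x_2\oplus y_1)=T_n(0.4,0.6)=0$, contradicting Eq.~(\ref{eqn:condition_dual}). (The two examples are images of each other under $x\mapsto 1-x$, consistent with Theorem~\ref{thm:negation_RI}.)

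To be fair, you are faithfully reproducing the paper's reasoning: the published proof handles $x_1+y_1<1$ exactly as you do (that part is fine) and for $x_1+y_1\ge 1$ defers to the proof of Theorem~\ref{thm:special_case}, which likewise substitutes $\min(x_i,y_j)$ for $x_i\otimes y_j$ under a non-strict inequality. That substitution is harmless there because with $\oplus=T_L$ the boundary configurations collapse to $0$ on both sides, but for $\oplus=T_n'$ the tie $x_1+y_1=1$ is exactly where $\otimes$ rounds down to $0$ while $\oplus$ rounds up to $1$, and the theorem as stated is false. Your argument (and the paper's) does establish Eq.~(\ref{eqn:condition}) whenever $x_1+y_1\neq 1$, so the failure is confined to that boundary set; any genuine fix must restrict the statement away from ties $x_i+y_j=1$ or alter the tie-breaking in the definitions of $T_n$ and $T_n'$, not merely tighten the write-up.
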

\begin{proof}
Let $x_1\leq x_2$ and $y_1\leq y_2$, and $\otimes = T_n$ and $\oplus = T_n'$, if
$x_1+y_1 < 1$, then $x_1\otimes y_1 = 0$ and $(x_1\otimes y_1)\oplus(x_2\otimes y_2) = x_2\otimes y_2$. On the other hand $(x_1\otimes y_2) \leq x_1$ and $(x_2\otimes y_1) \leq y_1$, and thus $(x_1\otimes y_2) +(x_2\otimes y_1) \leq x_1+y_1 < 1$ and $(x_1\otimes y_2) \oplus(x_2\otimes y_1) = \max((x_1\otimes y_2),(x_2\otimes y_1)) \leq (x_2\otimes y_2)$ and Eq. (\ref{eqn:condition}) is satisfied.
If $x_1+y_1 \geq 1$, the same proof as in Theorem \ref{thm:special_case} shows that Eq. (\ref{eqn:condition}) is satisfied. Thus $(T_n,T_n')$ satisfies the rearrangement inequality. Theorem \ref{thm:negation_RI} implies that the dual rearrangement inequality is satisfied as well.
\end{proof}

The results of applying Theorems \ref{thm:thm1}-\ref{thm:special_case2} to the $T$-norms and $T$-conorms in Tables \ref{tbl:t_norm}-\ref{tbl:t_conorm} are summarized in Table \ref{tbl:rearrange_t_norm}, where a $\checkmark$ indicates that the pair $(\otimes,\oplus)$ satisfies the rearrangement inequality and a $\Diamond$ indicates that $(\otimes,\oplus)$ satisfies the dual rearrangement inequality. Furthermore, for many entries in Table \ref{tbl:rearrange_t_norm} without $\checkmark$ or $\Diamond$ a corresponding example can be found to show that the rearrangement inequality or respectively its dual is not satisfied.

\begin{landscape}
\begin{table}[htbp]
\centering
\begin{tabular}{|c|c||c|c|c|c|c|c|c|c|c|c|c|c|c|c|c|}
\hline
&&\multicolumn{15}{|c|}{$\oplus$} \\ \hline
& & $T_m'$ & $T_p'$& $T_L'$ &  $T_d'$ & $T_n'$ &  $T_{DP}'^{\alpha'}$ & $T_{AMH}'^\alpha$ & $T_{C}'^{\alpha}$ &  $T_{F}'^{\alpha}$ &  $T_{Y}'^{\alpha}$ &  $T_{MT}'^{\alpha}$  & $T_{SW}'^{\alpha}$ & $T_{G}'^{\alpha}$& $T_{J}'^{\alpha}$ & $T_L$ \\ \hline\hline
\multirow{15}{0.9em}{$\otimes$}& $T_m$ & $\checkmark, \Diamond$&$\checkmark, \Diamond$&$\checkmark, \Diamond$&$\checkmark, \Diamond$&$\checkmark, \Diamond$&$\checkmark, \Diamond$&$\checkmark, \Diamond$&$\checkmark, \Diamond$ &$\checkmark, \Diamond$&$\checkmark, \Diamond$&$\checkmark, \Diamond$&$\checkmark, \Diamond$&$\checkmark, \Diamond$&$\checkmark, \Diamond$&$\checkmark, \Diamond$\\ \cline{2-17}
&$T_p$ &$\checkmark, \Diamond$&$\checkmark, \Diamond$&$\checkmark, \Diamond$&$\checkmark,\Diamond$&\checkmark&$\checkmark, \Diamond$&$\checkmark, \Diamond$&$\checkmark, \Diamond$ &$\checkmark, \Diamond$&$\checkmark, \Diamond$&$\checkmark, \Diamond$&$\checkmark, \Diamond$&$\checkmark, \Diamond$&$\checkmark, \Diamond$&$\checkmark$\\ \cline{2-17}
&$T_L$ &$\checkmark, \Diamond$&$\checkmark, \Diamond$&$\checkmark, \Diamond$&$\Diamond$&\checkmark&$\checkmark, \Diamond$&$\checkmark, \Diamond$&$\checkmark, \Diamond$ &$\checkmark, \Diamond$&$\checkmark, \Diamond$&$\checkmark, \Diamond$&$\checkmark, \Diamond$&$\checkmark, \Diamond$&$\checkmark, \Diamond$&$\checkmark$\\ \cline{2-17}
&$T_d$ &$\checkmark,\Diamond$&$\checkmark,\Diamond$&$\checkmark$&$\checkmark,\Diamond$&$\checkmark$&$\checkmark,\Diamond$&$\checkmark,\Diamond$&$\checkmark$&$\checkmark,\Diamond$&$\checkmark$&$\checkmark$&$\checkmark$&$\checkmark,\Diamond$&$\checkmark,\Diamond$&\\ \cline{2-17}
&$T_n$ &$\checkmark,\Diamond$&$\Diamond$&$\Diamond$&$\Diamond$&$\checkmark,\Diamond$&$\Diamond$&$\Diamond$&$\Diamond$ &$\Diamond$&$\Diamond$&$\Diamond$&$\Diamond$&$\Diamond$&$\Diamond$&$\checkmark,\Diamond$\\ \cline{2-17}
&$T_{DP}^{\alpha}$ &$\checkmark, \Diamond$&$\checkmark, \Diamond$&$\checkmark, \Diamond$&$\checkmark,\Diamond$&\checkmark&$\checkmark, \Diamond$&$\checkmark, \Diamond$&$\checkmark, \Diamond$ &$\checkmark, \Diamond$&$\checkmark, \Diamond$&$\checkmark, \Diamond$&$\checkmark, \Diamond$&$\checkmark, \Diamond$&$\checkmark, \Diamond$&$\checkmark$\\ \cline{2-17}
&$T_{AMH}^\alpha$ &$\checkmark, \Diamond$&$\checkmark, \Diamond$&$\checkmark, \Diamond$&$\checkmark,\Diamond$&\checkmark&$\checkmark, \Diamond$&$\checkmark, \Diamond$&$\checkmark, \Diamond$ &$\checkmark, \Diamond$&$\checkmark, \Diamond$&$\checkmark, \Diamond$&$\checkmark, \Diamond$&$\checkmark, \Diamond$&$\checkmark, \Diamond$&$\checkmark$\\ \cline{2-17}
&$T_{C}^\alpha$ &$\checkmark, \Diamond$&$\checkmark, \Diamond$&$\checkmark, \Diamond$&$\Diamond$&\checkmark&$\checkmark, \Diamond$&$\checkmark, \Diamond$&$\checkmark, \Diamond$ &$\checkmark, \Diamond$&$\checkmark, \Diamond$&$\checkmark, \Diamond$&$\checkmark, \Diamond$&$\checkmark, \Diamond$&$\checkmark, \Diamond$&$\checkmark$\\ \cline{2-17}
&$T_{F}^\alpha$ &$\checkmark, \Diamond$&$\checkmark, \Diamond$&$\checkmark, \Diamond$&$\checkmark, \Diamond$&\checkmark&$\checkmark, \Diamond$&$\checkmark, \Diamond$&$\checkmark, \Diamond$ &$\checkmark, \Diamond$&$\checkmark, \Diamond$&$\checkmark, \Diamond$&$\checkmark, \Diamond$&$\checkmark, \Diamond$&$\checkmark, \Diamond$&$\checkmark$\\ \cline{2-17}
&$T_{Y}^\alpha$ &$\checkmark, \Diamond$&$\checkmark, \Diamond$&$\checkmark, \Diamond$&$\Diamond$&\checkmark&$\checkmark, \Diamond$&$\checkmark, \Diamond$&$\checkmark, \Diamond$ &$\checkmark, \Diamond$&$\checkmark, \Diamond$&$\checkmark, \Diamond$&$\checkmark, \Diamond$&$\checkmark, \Diamond$&$\checkmark, \Diamond$&$\checkmark$\\ \cline{2-17}
&$T_{MT}^\alpha$ &$\checkmark, \Diamond$&$\checkmark, \Diamond$&$\checkmark, \Diamond$&$\Diamond$&\checkmark&$\checkmark, \Diamond$&$\checkmark, \Diamond$&$\checkmark, \Diamond$ &$\checkmark, \Diamond$&$\checkmark, \Diamond$&$\checkmark, \Diamond$&$\checkmark, \Diamond$&$\checkmark, \Diamond$&$\checkmark, \Diamond$&$\checkmark$\\ \cline{2-17}
&$T_{SW}^\alpha$ &$\checkmark, \Diamond$&$\checkmark, \Diamond$&$\checkmark, \Diamond$&$\Diamond$&\checkmark&$\checkmark, \Diamond$&$\checkmark, \Diamond$&$\checkmark, \Diamond$ &$\checkmark, \Diamond$&$\checkmark, \Diamond$&$\checkmark, \Diamond$&$\checkmark, \Diamond$&$\checkmark, \Diamond$&$\checkmark, \Diamond$&$\checkmark$\\ \cline{2-17}
&$T_{G}^\alpha$ &$\checkmark, \Diamond$&$\checkmark, \Diamond$&$\checkmark, \Diamond$&$\checkmark,\Diamond$&\checkmark&$\checkmark, \Diamond$&$\checkmark, \Diamond$&$\checkmark, \Diamond$ &$\checkmark, \Diamond$&$\checkmark, \Diamond$&$\checkmark, \Diamond$&$\checkmark, \Diamond$&$\checkmark, \Diamond$&$\checkmark, \Diamond$&$\checkmark$\\ \cline{2-17}
&$T_{J}^\alpha$ &$\checkmark, \Diamond$&$\checkmark, \Diamond$&$\checkmark, \Diamond$&$\checkmark,\Diamond$&\checkmark&$\checkmark, \Diamond$&$\checkmark, \Diamond$&$\checkmark, \Diamond$ &$\checkmark, \Diamond$&$\checkmark, \Diamond$&$\checkmark, \Diamond$&$\checkmark, \Diamond$&$\checkmark, \Diamond$&$\checkmark, \Diamond$&$\checkmark$\\ \cline{2-17}
&$T_{L}'$ &$\checkmark,\Diamond$&$\Diamond$&$\Diamond$&&$\checkmark,\Diamond$&$\Diamond$&$\Diamond$&$\Diamond$&$\Diamond$&$\Diamond$&$\Diamond$&$\Diamond$&$\Diamond$&$\Diamond$&\\ \hline
\end{tabular}
\caption{Pairs of $T$-norms and $T$-conorms $(\otimes, \oplus)$ for which the rearrangement inequality (or its dual) holds. A $\checkmark$ and $\Diamond$ indicates the rearrangement inequality and its dual holds respectively. For $\alpha < 0$, $(T_d,T_{C}'^\alpha)$ satisfies the dual rearrangement inequality and $(T_C^\alpha,T_d')$ satisfies the rearrangement inequality.}
\label{tbl:rearrange_t_norm}
\end{table}
\end{landscape}

\section{Variants of rearrangement inequality}
Several variations of rearrangement inequalities are studied in Ref. \cite{wu:rearrangement:2022} that only rely on satisfying equations of the form Eqns (\ref{eqn:condition})-(\ref{eqn:condition_dual}) and thus Theorems \ref{thm:thm1}-\ref{thm:special_case2} hold for these variants as well. As specific examples, we describe how these results apply to Table \ref{tbl:rearrange_t_norm}.
 
\begin{theorem} \label{thm:variation-sumprod}
Let $a_i$ be a set of $2n$ numbers in $[0,1]$ and let $b_i$ be the numbers $a_i$ sorted such that $b_1 \leq b_2\leq \cdots \leq b_{2n}$. 
If the pair $(\otimes,\oplus)$ corresponds to a $\checkmark$ in Table \ref{tbl:rearrange_t_norm}, then
\begin{equation}\bigoplus_{i=1}^n b_{i}\otimes b_{2n-i+1} \leq   \bigoplus_{i=1}^n a_{2i-1}\otimes a_{2i} \leq \bigoplus_{i=1}^n b_{2i-1}\otimes b_{2i}.\label{eqn:variant1} \end{equation}
If the pair $(\otimes,\oplus)$ corresponds to a $\Diamond$ in Table \ref{tbl:rearrange_t_norm}, then
\begin{equation}\bigotimes_{i=1}^n \left(b_{2i-1}\oplus b_{2i}\right) \leq   \bigotimes_{i=1}^n \left(a_{2i-1}\oplus a_{2i}\right) \leq \bigotimes_{i=1}^n \left(b_{i}\oplus b_{2n-i+1}\right).\label{eqn:variant1_dual} \end{equation}
\end{theorem}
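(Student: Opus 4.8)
The plan is to reduce Theorem~\ref{thm:variation-sumprod} to the two-variable inequalities (\ref{eqn:condition}) and (\ref{eqn:condition_dual}). Recall that a $\checkmark$ (resp.\ a $\Diamond$) entry in Table~\ref{tbl:rearrange_t_norm} means that $(\otimes,\oplus)$ satisfies the rearrangement inequality (resp.\ its dual), which is equivalent to (\ref{eqn:condition}) (resp.\ (\ref{eqn:condition_dual})); moreover Ref.~\cite{wu:rearrangement:2022} establishes variant rearrangement inequalities from relations of exactly this form, so it is enough to derive (\ref{eqn:variant1})--(\ref{eqn:variant1_dual}) directly from (\ref{eqn:condition})--(\ref{eqn:condition_dual}). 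Since $\otimes$ is commutative, $\bigoplus_{i=1}^n a_{2i-1}\otimes a_{2i}$ depends only on the partition of the multiset $\{b_1,\dots,b_{2n}\}$ into the $n$ unordered pairs $\{a_{2i-1},a_{2i}\}$; thus (\ref{eqn:variant1}) asserts that among all perfect matchings of $\{b_1,\dots,b_{2n}\}$ the ``consecutive'' matching $\{b_1,b_2\},\{b_3,b_4\},\dots$ maximizes, and the ``opposite'' matching $\{b_1,b_{2n}\},\{b_2,b_{2n-1}\},\dots$ minimizes, the associated $\oplus$-sum of $\otimes$-products (the last term of (\ref{eqn:variant1}) being read with $\otimes$ rather than ordinary multiplication).

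First I would record the four-element case: for $c_1\le c_2\le c_3\le c_4$ in $[0,1]$ one has $(c_1\otimes c_2)\oplus(c_3\otimes c_4)\ \ge\ (c_1\otimes c_3)\oplus(c_2\otimes c_4)\ \ge\ (c_1\otimes c_4)\oplus(c_2\otimes c_3)$. The second inequality is (\ref{eqn:condition}) with $(x_1,x_2)=(c_1,c_2)$, $(y_1,y_2)=(c_3,c_4)$, and the first is (\ref{eqn:condition}) with $(x_1,x_2)=(c_1,c_4)$, $(y_1,y_2)=(c_2,c_3)$ followed by commutativity of $\otimes$. Hence, among the three matchings of four sorted numbers, the consecutive one is largest and the opposite one smallest.

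The bounds in (\ref{eqn:variant1}) then follow by induction on $n$; the case $n=1$ is trivial by commutativity of $\otimes$. For the upper bound, given any matching $M$, let $b_p$ and $b_q$ be the partners in $M$ of $b_{2n}$ and $b_{2n-1}$. If $p\ne 2n-1$, replace the pairs $\{b_{2n},b_p\}$ and $\{b_{2n-1},b_q\}$ by $\{b_{2n},b_{2n-1}\}$ and $\{b_p,b_q\}$. Since $b_p,b_q\le b_{2n-1}\le b_{2n}$, the four-element case applied to these entries gives $(b_{2n}\otimes b_p)\oplus(b_{2n-1}\otimes b_q)\le(b_{2n}\otimes b_{2n-1})\oplus(b_p\otimes b_q)$, and because $\oplus$ is commutative, associative and monotone we may $\oplus$-adjoin the remaining, unchanged summands to both sides, so the total does not decrease. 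Now $b_{2n},b_{2n-1}$ are paired, the remaining $2n-2$ entries are precisely $b_1\le\dots\le b_{2n-2}$, the induction hypothesis bounds their $\oplus$-sum by that of their consecutive matching, and a final use of monotonicity of $\oplus$ (adjoining $b_{2n-1}\otimes b_{2n}$) yields the upper bound of (\ref{eqn:variant1}). The lower bound is symmetric: take the partners of the global extremes $b_1$ and $b_{2n}$ and perform the analogous swap --- now the four-element case gives the reverse inequality, since the target local pairing is the opposite one --- then induct on the remaining entries $b_2\le\dots\le b_{2n-1}$. For (\ref{eqn:variant1_dual}) one runs the same argument with the roles of $\otimes$ and $\oplus$ interchanged, all inequalities reversed, and (\ref{eqn:condition_dual}) in place of (\ref{eqn:condition}).

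Nothing here is deep: the only points requiring care are (i) that isolating two summands of a $\oplus$-sum and replacing them by a $\ge$ (resp.\ $\le$) pair is legitimate, which rests solely on commutativity, associativity and monotonicity of the uninorm $\oplus$, and (ii) that after each swap the four numbers involved really are sorted as claimed --- automatic, since $b_{2n}$ is a global maximum and, in the lower-bound step, $b_1$ a global minimum. No continuity, no additive generators, and no property $A$/$B$ beyond what (\ref{eqn:condition})/(\ref{eqn:condition_dual}) already encode are needed.
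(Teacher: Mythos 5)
Your argument is correct, and it is worth noting how it relates to what the paper actually does: the paper gives no standalone proof of Theorem~\ref{thm:variation-sumprod}, but instead observes that the variants studied in \cite{wu:rearrangement:2022} depend only on inequalities of the form (\ref{eqn:condition})--(\ref{eqn:condition_dual}) and imports the result from there. You perform the same reduction to the two-variable swap conditions (legitimate, since a $\checkmark$/$\Diamond$ in Table~\ref{tbl:rearrange_t_norm} is, by the paper's first theorem, equivalent to (\ref{eqn:condition})/(\ref{eqn:condition_dual})), but then make the theorem self-contained: the four-element chain $(c_1\otimes c_2)\oplus(c_3\otimes c_4)\ge(c_1\otimes c_3)\oplus(c_2\otimes c_4)\ge(c_1\otimes c_4)\oplus(c_2\otimes c_3)$ is correctly extracted from (\ref{eqn:condition}) via the two substitutions you name plus commutativity, and the induction by local swaps is sound because the swap always involves the global maximum (resp.\ both global extremes), so the four entries are sorted as required, and adjoining the untouched summands uses only commutativity, associativity and monotonicity of the uninorm $\oplus$ (likewise $\otimes$ in the dual case). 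What your route buys is a complete combinatorial proof over all perfect matchings that does not outsource anything to \cite{wu:rearrangement:2022}; what the paper's route buys is brevity and uniformity, since the same citation covers all the variants (including the circular one of Theorem~\ref{thm:circular}) at once. You also correctly read the last expression in (\ref{eqn:variant1}) as $b_{2i-1}\otimes b_{2i}$, i.e.\ the juxtaposition there is a typo for $\otimes$.
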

 
\subsection{Circular rearrangement inequality}
This is a variant of rearrangement inequalities considered in Ref. \cite{Yu2018}. 
Let $\sigma_{m_1}$ denote the permutation $(1,n-1,3,n-3,5,\cdots, n-6,6,n-4,4,n-2,2,n)$ and
$\sigma_{m_2}$ denote the permutation $(1,3,5,\cdots, n, \cdots, 6,4,2)$.

\begin{theorem} \label{thm:circular}
Let $0\leq a_1 \leq a_2\leq \cdots \leq a_{n}\leq 1$ and $\sigma \in S_n$ be a permutation on $\{1,\cdots n\}$.
If the pair $(\otimes,\oplus)$ corresponds to a $\checkmark$ in Table \ref{tbl:rearrange_t_norm}, then
\begin{equation} (a_{\sigma(1)}\otimes a_{\sigma(2)}) \oplus  (a_{\sigma(2)}\otimes a_{\sigma(3)})  \oplus  \cdots \oplus (a_{\sigma(n)}\otimes a_{\sigma(1)}) \label{eqn:circular} \end{equation}
is minimized and maximized when the permutation $\sigma$ is equal to $\sigma_{m_1}$ and $\sigma_{m_2}$ respectively.
If the pair $(\otimes,\oplus)$ corresponds to a $\Diamond$ in Table \ref{tbl:rearrange_t_norm}, then
\begin{equation} (a_{\sigma(1)}\oplus a_{\sigma(2)}) \otimes  (a_{\sigma(2)}\oplus a_{\sigma(3)})  \otimes  \cdots \otimes (a_{\sigma(n)}\oplus a_{\sigma(1)}) \label{eqn:circular_dual} \end{equation}
is maximized and minimized when the permutation $\sigma$ is equal to $\sigma_{m_1}$ and $\sigma_{m_2}$ respectively.
\end{theorem}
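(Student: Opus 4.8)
The plan is to transfer the exchange argument of Ref.~\cite{Yu2018} to the present setting, observing that every inequality invoked there is either an instance of the commutativity, associativity or monotonicity of the uninorms $\otimes,\oplus$, or an instance of Eqn.~(\ref{eqn:condition}) (in the $\checkmark$ case) or Eqn.~(\ref{eqn:condition_dual}) (in the $\Diamond$ case). Write $S(\sigma)=\bigoplus_{i=1}^{n}\bigl(a_{\sigma(i)}\otimes a_{\sigma(i+1)}\bigr)$ with the index read cyclically, $\sigma(n+1)=\sigma(1)$. By associativity and commutativity of $\oplus$ we may group the summands of $S(\sigma)$ in any order, and by monotonicity of $\oplus$, if a subcollection of summands is replaced by another collection whose $\oplus$-combination is no larger (resp.\ no smaller), then $S(\sigma)$ does not increase (resp.\ does not decrease). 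The same remark applies to the product functional in Eqn.~(\ref{eqn:circular_dual}) with the roles of $\otimes$ and $\oplus$ interchanged.

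The basic building block is the following reformulation of Eqn.~(\ref{eqn:condition}) using commutativity of $\otimes$: for $0\le c\le d\le 1$ and $0\le a\le b\le 1$,
\begin{equation}
(c\otimes a)\oplus (b\otimes d)\ \ge\ (c\otimes b)\oplus (a\otimes d),
\label{eqn:circ-block}
\end{equation}
and, dually for a $\Diamond$ pair, $(c\oplus a)\otimes (b\oplus d)\le (c\oplus b)\otimes (a\oplus d)$; indeed, take $x_1=c$, $x_2=d$, $y_1=a$, $y_2=b$ in Eqn.~(\ref{eqn:condition}) (resp.~Eqn.~(\ref{eqn:condition_dual})) and use $d\otimes b=b\otimes d$ and $d\otimes a=a\otimes d$. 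By the equivalence established earlier between the (dual) rearrangement inequality and Eqn.~(\ref{eqn:condition}) (resp.~Eqn.~(\ref{eqn:condition_dual})), inequality~(\ref{eqn:circ-block}) holds for precisely the pairs marked $\checkmark$ in Table~\ref{tbl:rearrange_t_norm}, and its dual for those marked $\Diamond$.

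Next I record the effect of an \emph{arc reversal}: reversing the block of cyclic positions $[p,q]$ leaves every summand $a_{\sigma(i)}\otimes a_{\sigma(i+1)}$ with $p\le i<q$ unchanged (commutativity of $\otimes$) and replaces the two boundary summands $a_{\sigma(p-1)}\otimes a_{\sigma(p)}$, $a_{\sigma(q)}\otimes a_{\sigma(q+1)}$ by $a_{\sigma(p-1)}\otimes a_{\sigma(q)}$, $a_{\sigma(p)}\otimes a_{\sigma(q+1)}$. Combining this with the grouping/monotonicity remark and~(\ref{eqn:circ-block}), an arc reversal does not increase $S$ when the boundary values are in ``anti-sorted'' position and does not decrease it when they are ``sorted'', and symmetrically for the functional in Eqn.~(\ref{eqn:circular_dual}) in the $\Diamond$ case. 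This is exactly the elementary step used in Ref.~\cite{Yu2018}. The argument of Ref.~\cite{Yu2018} then carries over verbatim: from an arbitrary $\sigma$ one produces a finite sequence of arc reversals terminating at $\sigma_{m_1}$ (resp.~$\sigma_{m_2}$) along which $S$ is non-increasing (resp.~non-decreasing), and one checks that $\sigma_{m_1},\sigma_{m_2}$ are stationary; in the $\Diamond$ case the same reversals are non-decreasing (resp.~non-increasing) for the functional in Eqn.~(\ref{eqn:circular_dual}). The case distinctions in the definition of $\sigma_{m_1}$ for even versus odd $n$ are inherited unchanged.

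The main obstacle is not a new inequality but checking that the transfer is faithful, i.e.\ that every move used in Ref.~\cite{Yu2018} is an arc reversal (or a composition of arc reversals), so that it is powered solely by~(\ref{eqn:circ-block}) and the uninorm axioms; a move altering more than the two boundary summands at once would in general demand a multi-argument strengthening of Eqn.~(\ref{eqn:condition}) that is not available. If Ref.~\cite{Yu2018} phrases a step as, e.g., relocating a single element, one rewrites it as two arc reversals, or --- when the element is moved into position between its eventual neighbours --- verifies directly that the affected summands are handled by chaining two instances of~(\ref{eqn:circ-block}) under monotonicity of $\oplus$. Once this bookkeeping is in place the theorem follows.
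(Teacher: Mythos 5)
Your proposal matches the paper's own justification: the paper proves Theorem~\ref{thm:circular} only by the remark at the start of Section~4 (citing \cite{wu:rearrangement:2022}) that these variants rely solely on inequalities of the form Eqns.~(\ref{eqn:condition})--(\ref{eqn:condition_dual}) together with commutativity, associativity and monotonicity of the uninorms, which is exactly your reduction of each arc-reversal (two-summand exchange) step to the two-element condition plus monotonicity of the outer operation. The only difference is that the paper delegates the bookkeeping you flag --- that the exchange argument of \cite{Yu2018} uses only such moves --- to the treatment in \cite{wu:rearrangement:2022}, so your argument is essentially the same approach, spelled out in slightly more explicit form.
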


\section{Conclusions}
We derived conditions under which $T$-norm based multivalued logics satisfy the rearrangement inequality and its dual. In particular, these inequalities are satisfied when the $T$-norms are Archimedean copulas, which covers many well-known classes of $T$-norms in the literature. On the other hand, the definitions of properties $A$, $B$, $A'$ and $B'$ are not restricted to $T$-norms and $T$-conorms and can be applied to any nonnegative functions that satisfies the commutativity, associativity and monotonicity conditions to show that the rearrangement inequality and its dual hold for them.

\end{document}